\def\R{\mathbb{R}}
\def\B{\mathbb{B}}
\def\N{\mathbb{N}}
\def\Z{\mathbb{Z}}
\def\Q{\mathbb{Q}}
\newcommand{\ben}{\begin{enumerate}}
\newcommand{\bit}{\begin{itemize}}
\newcommand{\een}{\end{enumerate}}
\newcommand{\eit}{\end{itemize}}
\newcommand{\ed}{\end{document}}
\def\cC{\mathcal{C}}
\def\cR{\mathcal{R}}
\def\cW{\mathcal{W}}
\def\cL{\mathcal{L}}
\def\cM{\mathcal{M}}
\def\cO{\mathcal{O}}
\let\hat=\widehat
\let\landa=\lambda
\let\alfa=\alpha
\let\parc=\partial
\def\ep{\varepsilon}
\def\landa{\lambda}
\def\flecha{\rightarrow}
\def\esiz{\langle}
\def\esde{\rangle}
\def\S{\Sigma}
\def\cte.{\mathop{\rm cte.}\nolimits}
\def\cosh{\mathop{\rm cosh }\nolimits}
\def\N{\mathbb{N}}
\def\B{\mathbb{B}}
\def\Q{\mathbb{Q}}
\def\R{\mathbb{R}}
\def\Z{\mathbb{Z}}
\def\S{\mathbb{S}}
\newfont{\bb}{msbm10 at 12pt}
\titleformat{\subsection}[runin]
{\bfseries} {\thesubsection{.}}{0.15cm}{}[.]
\titleformat{\subsubsection}[runin]
{\em}{\thesubsubsection{.}}{0.15cm}{}[.]
\newtheorem{theorem}{Theorem}[section]
\newtheorem{lemma}[theorem]{Lemma}
\newtheorem{proposition}[theorem]{Proposition}
\newtheorem{remark}[theorem]{Remark}
\newtheorem{corollary}[theorem]{Corollary}
\newtheorem{definition}[theorem]{Definition}
\newtheorem{claim}[theorem]{Claim}
\theoremstyle{definition}
\numberwithin{equation}{section}
\numberwithin{figure}{section}
\begin{document}
\fancyhead[LO]{Free boundary CMC annuli}
\fancyhead[RE]{Alberto Cerezo, Isabel Fernández, Pablo Mira}
\fancyhead[RO,LE]{\thepage}

\thispagestyle{empty}

\begin{center}
{\bf \LARGE Annular solutions to the partitioning problem in a ball} 

\vspace*{5mm}

\hspace{0.2cm} {\Large Alberto Cerezo, Isabel Fernández and Pablo Mira}
\end{center}

%



\footnote[0]{
\noindent \emph{Mathematics Subject Classification}: 53A10, 53C42. \\ \mbox{} \hspace{0.25cm} \emph{Keywords}: constant mean curvature, free boundary, capillary surfaces, spherical curvature lines, nodoid, partitioning problem}

\vspace*{7mm}

\begin{quote}
{\small
\noindent {\bf Abstract}\hspace*{0.1cm}
For any $n\in \mathbb{N}$, $n\geq 2$, we construct a real analytic, one-parameter family of compact embedded CMC annuli with free boundary in the unit ball $\mathbb{B}^3$ of $\mathbb{R}^3$ with a prismatic symmetry group of order $4n$. These examples give a negative answer to the uniqueness problem by Nitsche and Wente of whether any annular solution to the partitioning problem in the ball should be rotational.


\vspace*{0.1cm}

}
\end{quote}

\section{Introduction}


The classical \emph{partitioning problem} in the unit ball $\B^3$ of $\R^3$ asks to find, among all surfaces that divide $\B^3$ into two pieces of prescribed volumes, those which are critical points for the area functional. By variational and regularity theory, the solutions to this problem are known to be compact embedded surfaces $\Sigma$ of constant mean curvature (CMC) contained in $\B^3$, that intersect $\parc \B^3$ orthogonally along their boundary $\parc \Sigma$, see e.g. \cite{GHN,H}. In other words, they are embedded \emph{free boundary CMC surfaces} in $\B^3$ \cite{Fi,Nit,Ros,Str}.

It is well known that the solutions to the isoperimetric problem in $\B^3$ under this prescribed volume constraint are either flat disks or spherical caps (\cite{BS,BM}, see also the survey \cite{Ros}). More generally, it follows by the results of Ros-Vergasta \cite{RV} and Nunes \cite{Nu} that any \emph{stable} solution to the partitioning problem in $\B^3$ is a flat disk or a spherical cap; see also \cite{WX}. Along the same lines, Nitsche \cite{Nit} proved in 1985 the topological uniqueness of such simply connected examples, in the following form: \emph{any free boundary CMC surface in $\B^3$ with the topology of a disk is either a flat disk or a spherical cap.} See also Ros and Souam \cite{RS}.

The topological uniqueness problem for \emph{annular} solutions to the partitioning problem was formulated by Nitsche in 1985. In \cite{Nit}, Nitsche indicated that there exist certain embedded doubly connected rotational CMC surfaces that have free boundary in $\B^3$. For the case of minimal surfaces, i.e. when the mean curvature $H$ is zero, this surface is the well-known \emph{critical catenoid}, while for other values of $H\neq 0$, they are compact pieces of Delaunay surfaces (unduloids or nodoids). Nitsche claimed without proof in \cite{Nit} that any free boundary CMC annulus in $\B^3$ should be rotational, and hence one of these examples. Later on, Wente \cite{W2} revisited this problem and asked whether any annular solution to the partitioning problem in $\B^3$ (i.e. any embedded free boundary CMC annulus in $\B^3$) should be rotational. 

Wente obtained in \cite{W2} examples of non-embedded free boundary CMC annuli in $\B^3$ with very large mean curvature. For the $H=0$ case, Fernández, Hauswirth and Mira have recently constructed free boundary minimal annuli in $\B^3$, see \cite{FHM}. These examples in \cite{FHM,W2} give non-embedded counterexamples to Nitsche's claim in \cite{Nit}. In \cite{FHM} one can also find examples of embedded \emph{capillary} minimal annuli in $\B^3$ that intersect $\parc \B^3$ at a constant angle $\theta\neq \pi/2$. This still leaves unanswered the important \emph{critical catenoid conjecture}, according to which the critical catenoid should be the only embedded free boundary minimal annulus in $\B^3$; see Fraser and Li \cite{FL}, and also \cite{L,FHM} for a more updated discussion.

Our aim in this paper is to construct a large family of embedded free boundary CMC annuli in $\B^3$ that are not Delaunay surfaces. This gives a negative answer to the 1995 uniqueness problem by Wente in \cite{W2}, and produces the first known annular solutions to the partitioning problem in a ball that are not rotational. It also shows that Nitsche's topological uniqueness claim in \cite{Nit} is not true even in the embedded case.

\begin{theorem}\label{th:main}
For any $n\in \N$, $n>1$, there exists a real analytic family $\{\mathbb{A}_n(\mu) : \mu \in [0,\ep)\}$ of compact CMC annuli in $\R^3$ such that:
\begin{enumerate}
\item
$\mathbb{A}_n(\mu)$ is contained in the closed unit ball $\B^3$, with $\parc \mathbb{A}_n(\mu)\subset \parc \B^3$.
 \item
$\mathbb{A}_n(\mu)$ is embedded.
\item
$\mathbb{A}_n(\mu)$ intersects $\parc \B^3$ orthogonally along its boundary, i.e., $\mathbb{A}_n(\mu)$ is free boundary.
\item
Each $\mathbb{A}_n(\mu)$ has a family of spherical curvature lines. 
 \item
Each $\mathbb{A}_n(\mu)$ is symmetric with respect to the $x_3=0$ plane, and with respect to $n$ equiangular vertical planes containing the $x_3$-axis.  \item
$\mathbb{A}_n(0)$ is a compact piece of a free boundary nodoid in $\B^3$. If $\mu>0$, then $\mathbb{A}_n(\mu)$ is not rotational, and has a prismatic symmetry group of order $4n$. In this way, the symmetry group of $\mathbb{A}_n(\mu)$, $\mu>0$, is isomorphic to $D_n\times \Z_2$.
\end{enumerate}
\end{theorem}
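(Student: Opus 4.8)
The plan is to realize the family $\{\mathbb{A}_n(\mu)\}$ as an equivariant bifurcation branch emanating, at $\mu=0$, from a fixed compact piece of a rotational free boundary nodoid, the symmetry breaking being governed by the $n$-fold mode. The decisive structural tool is the spherical curvature line condition of item (4): I would first show that a CMC surface carrying a family of spherical curvature lines and meeting $\parc\B^3$ orthogonally is encoded by a small amount of ODE data. Concretely, write the surface as a one-parameter family of curves $\gamma_t$, each lying on a sphere $S_t$ of an appropriately normalized one-parameter family of spheres adapted to $\parc\B^3$; by Joachimsthal's theorem, and since spheres are totally umbilic, each $\gamma_t$ is a line of curvature of the surface precisely when the surface meets $S_t$ at a constant angle along $\gamma_t$. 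Imposing this constant-angle condition together with $H\equiv\text{const}$ collapses the CMC system into a Hill-type linear ODE coupled to the angle and radius functions, and the annular, free boundary, and symmetry requirements become boundary and periodicity conditions for this reduced problem. Within it, the rotational nodoids are exactly the solutions whose curvature lines are the horizontal circles (planar, hence spherical in the M\"obius sense), and I would single out the specific embedded free boundary nodoid piece described in the Introduction as the base point of the branch.

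Next I would set up the bifurcation. Let $\Phi(\mu,\cdot)=0$ denote the reduced period--closing map together with the free boundary condition, posed on a Banach space of functions invariant under the prismatic group $D_n\times\Z_2$ generated by the reflection in $\{x_3=0\}$ and the $n$ equiangular vertical mirror planes of item (5). Working in this symmetric subspace is essential: the Jacobi field coming from rotations about the axis is an angular mode that is not $D_n$-invariant for $n>1$, so it is automatically excluded, which is what renders the relevant eigenvalue simple. I would then linearize $\Phi$ at the nodoid and analyze the resulting Jacobi-type operator, whose solutions organize into angular modes indexed by an integer $k$. The construction reduces to detecting, for the $k=n$ mode, a value of the nodoid parameter at which the linearized operator acquires a one-dimensional kernel inside the symmetric space. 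Showing that such a degeneracy occurs for every $n>1$, that it is simple, and that it is crossed transversally as the nodoid parameter varies is the analytic heart of the argument.

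With the kernel computation in hand, I would invoke the real analytic version of the Crandall--Rabinowitz local bifurcation theorem from a simple eigenvalue. Since the entire reduced problem depends analytically on its data, this yields a real analytic one-parameter branch $\mu\mapsto\mathbb{A}_n(\mu)$, $\mu\in[0,\ep)$, of solutions bifurcating from the nodoid, each automatically CMC, free boundary, and carrying its family of spherical curvature lines and its $D_n\times\Z_2$ symmetry by construction; the transversality built into Crandall--Rabinowitz guarantees that the $n$-mode is genuinely switched on, so $\mathbb{A}_n(\mu)$ is not rotational for $\mu>0$ and its symmetry group is exactly the prismatic group of order $4n$, giving item (6). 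The remaining geometric conclusions propagate from $\mu=0$ by continuity: the base nodoid piece is a compact embedded surface meeting $\parc\B^3$ transversally, and both embeddedness of a compact surface and transversal free boundary contact are open conditions, so after shrinking $\ep$ every $\mathbb{A}_n(\mu)$ stays embedded with boundary on $\parc\B^3$ meeting it orthogonally, yielding items (1)--(3).

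The main obstacle I anticipate lies in the linearization step. One must fix the correct normalization of the sphere family and of the reduced ODE so that the nodoid appears as an explicit solution, then compute its Jacobi operator precisely enough to prove that the $n$-th mode produces a simple kernel that is crossed transversally. Controlling which value of the nodoid parameter yields the $n$-fold degeneracy, and verifying the strict transversality (nondegeneracy) inequality there, is where the real work lies; by contrast, the ODE reduction and the propagation of embeddedness are comparatively routine once the bifurcation instant is understood.
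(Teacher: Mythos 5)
Your proposal takes a genuinely different route from the paper --- equivariant Crandall--Rabinowitz bifurcation from the nodoid (in the spirit of Mazzeo--Pacard's bifurcating nodoids, which the paper's discussion section explicitly contrasts itself with) versus the paper's explicit integrable-systems construction --- but as written it has a genuine gap. The entire existence question is concentrated in the step you yourself defer: proving that for every $n>1$ there is a nodoid parameter at which the linearized operator, restricted to the $D_n\times\Z_2$-invariant space, has a one-dimensional kernel that is crossed transversally. You offer no argument for this (no computation of the Jacobi fields of the free boundary nodoid with the linearized orthogonality condition on $\parc\B^3$, no mode-by-mode ODE analysis, no monotonicity of the relevant eigenvalue in the necksize), so the proposal is a plan rather than a proof. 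The paper's replacement for this step is entirely different and concrete: it integrates a three-parameter family $(\alpha,\beta,\gamma)$ of CMC surfaces with spherical curvature lines via the sinh-Gordon equation and Wente's separation of variables, turns the closing condition into the level set ${\rm Per}^{-1}(-1/n)$ of an explicit period integral shown to be monotone in $\gamma$ (Proposition \ref{pro:period}), achieves the free boundary condition $u^*=\tau$ by an intermediate value argument along a curve in parameter space (Theorem \ref{thm:Gamma}), and then gets the analytic branch from the finite-dimensional implicit function theorem.

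There is also a structural mismatch in where the spherical curvature line condition enters your scheme. If you pose the bifurcation problem for normal graphs over the nodoid piece --- the only setting in which ``Banach space,'' ``Jacobi-type operator,'' ``angular modes $k$,'' and ``simple eigenvalue'' are the natural language --- then nothing in Crandall--Rabinowitz forces the bifurcating surfaces to carry spherical curvature lines; item (4) would not hold ``by construction,'' and establishing it a posteriori would essentially require redoing the paper's construction. If instead you first reduce to the class of CMC surfaces with spherical curvature lines, as the paper does, then the reduced problem is finite dimensional (three parameters after normalization, with a gauge freedom along the nodoid locus that the paper must handle delicately in Claim \ref{claim:nodoid}, precisely because circles lie on many spheres), so there is no function-space kernel or eigenvalue crossing to analyze; the correct tools are finite-dimensional, which is what the paper uses. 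Your final openness/continuity argument for embeddedness and boundary contact is sound and matches the paper's last step, but it cannot substitute for the missing degeneracy analysis at the core.
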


We remark that these embedded CMC annuli $\mathbb{A}_n(\mu)$ are not minimal and that, for each $n>1$, the analytic map $\mu \mapsto H(\mu)$ that assigns to each $\mu$ the (constant) mean curvature of $\mathbb{A}_n(\mu)$ is not constant. Moreover, the mean curvature vector of $\mathbb{A}_n(\mu)$ points \emph{outwards} along the horizontal planar geodesic $\mathbb{A}_n(\mu)\cap \{x_3=0\}$, and for $\mu>0$ small enough, $\mathbb{A}_n(\mu)$ has negative Gaussian curvature. The constant mean curvatures $H_n$ of the embedded free boundary nodoids $\mathbb{A}_n(0)$ in $\B^3$ of Theorem \ref{th:main} satisfy that $H_n\to \8$ as $n\to \8$.

The proof of Theorem \ref{th:main} is related to the discovery by Wente \cite{W0} of immersed CMC tori in $\R^3$ in the 1980s. After \cite{W0}, Abresch \cite{Ab} and Walter \cite{Wa1} realized that one could construct these CMC tori in $\R^3$ having one family of planar curvature lines; in that case, the \emph{other} family of curvature lines of the CMC torus must be \emph{spherical}, that is, each element of such family must lie in some sphere or plane of $\R^3$. Later on, motivated by classical works of geometers of the 19th century, Wente studied in \cite{W} the class of CMC surfaces in $\R^3$ with a family of spherical curvature lines, producing new examples of CMC tori. In \cite{W2}, Wente used the same idea to construct immersed, non-embedded free boundary annuli in $\B^3$ with very large constant mean curvature. Let us remark that, in all these constructions, the family of spherical, non-planar curvature lines is the one associated to the \emph{largest} principal curvature of the torus $\Sigma$, when we assume that $\Sigma$ is oriented so that $H>0$.

In the present work we follow a similar geometric ansatz, and construct embedded free boundary CMC annuli in $\B^3$ that have one family of spherical curvature lines. However, our construction is carried out in a region of the moduli space of all CMC surfaces with spherical curvature lines that remained unexplored in the previous works by Wente, Abresch and Walter. For instance, one of the aspects where we deviate from these works is that we chose the spherical curvature lines to correspond to the \emph{smallest} principal curvature of the CMC surface $\Sigma$. This choice is key for the process to work; indeed, if one choses the \emph{largest} principal curvature as the spherical one, one can still create free boundary CMC annuli in $\B^3$ but our study suggests that they will never be embedded. Let us also remark that despite these similarities, our proof of Theorem \ref{th:main} in order to reach embeddedness is different in nature to the construction of Wente in \cite{W2}.

The \emph{critical catenoid conjecture} mentioned above is usually conceived as the natural free boundary version in the ball of the Lawson conjecture, which claims that the Clifford torus is the only embedded minimal torus in the unit sphere $\S^3$. The Lawson conjecture was answered affirmatively by Brendle in \cite{B}. The proof by Brendle was extended by Andrews and Li \cite{AL} to the CMC case, proving that \emph{any embedded CMC torus in $\S^3$ is rotational}. In this sense, our Theorem \ref{th:main} shows that the natural free boundary version in $\B^3$ of the Brendle-Andrews-Li uniqueness theorem in $\S^3$ is not true.


A brief outline of the paper is as follows. In Section \ref{sec:2} we describe a general method to produce CMC surfaces $\Sigma$ in $\R^3$ with a family of spherical curvature lines corresponding to the \emph{smallest} principal curvature $\kappa_2$ of $\Sigma$. The procedure essentially comes from Wente \cite{W}, but we need to make a different discussion here. In this construction, we end up with three free parameters $(\alfa,\beta,\gamma)$. In Section \ref{sec:3} we show how certain choices of these parameters give rise to immersed CMC annuli $\Sigma$ with a discrete symmetry group. In Section \ref{sec:4} we find a suitable region in the parameter space $(\alfa,\beta,\gamma)$ where we can control the condition that $\Sigma$ intersects orthogonally a sphere along each boundary curve. Finally, in Section \ref{sec:5} we complete the proof of Theorem \ref{th:main}, by controlling the embeddedness of the surface, and the property that both boundary curves intersect orthogonally the same sphere. In Section \ref{sec:discussion} we discuss some open problems. 


\section{A family of CMC surfaces with spherical curvature lines}\label{sec:2}

Consider the parameter domain 
\begin{equation}\label{domain}
\cO :=\{(\alfa,\beta,\gamma)\in \R^3 : \alfa\geq 1, \beta\geq 1, \gamma\geq 1\}.
\end{equation}
Define in terms of $(\alfa,\beta,\gamma)\in \cO$ the polynomial of degree four
 \begin{equation}\label{def:pab}
p(x)= -\left(x-\frac{\alfa}{\gamma}\right)\left(x-\frac{1}{\alfa \gamma}\right)\left(x+\beta\gamma\right)\left(x+\frac{\gamma}{\beta}\right).
 \end{equation}
Note that $p(0)=-1$ and that:
\begin{enumerate}
\item
If $\alfa\neq 1$ and $\beta \neq 1$, then $p(x)$ has two positive roots and two negative roots.
 \item
If $\alfa=1$ (resp. $\beta =1$), the two positive (resp. negative) roots of $p(x)$ collapse into a single double root at $x=1/\gamma$ (resp. $x= -\gamma$).
\end{enumerate}

Define next in terms of $(\alfa,\beta,\gamma)\in \cO$ the constants 
\begin{equation}\label{const2}
A=\frac{1}{2}(\alfa+ \alfa^{-1}), \hspace{0.5cm} B=\frac{1}{2} (\beta+\beta^{-1}), \hspace{0.5cm} C= \frac{1}{2}(\gamma-\gamma^{-1}).
\end{equation}
%
%
Let $(y(u),z(u)):\R\flecha \R^2$ be the unique solution to the differential system 
\begin{equation}\label{system1}
\left\{\def\arraystretch{1.3} \begin{array}{lll} y'' & = & (\hat{a}-1) y - 2 y (y^2-z^2), \\ z'' & = & \hat{a}z - 2 z (y^2-z^2),\end{array} \right.
\end{equation}
with the initial conditions
\begin{equation}\label{inicondi}
y(0)=0, \hspace{0.2cm} z(0)=0, \hspace{0.3cm} y'(0)= \frac{(A+B)C}{2}, \hspace{0.3cm} z'(0)= \frac{(B-A)\sqrt{C^2+1}}{2},
\end{equation}
where $\hat a:= 1-AB+C^2$. 

We will next create a solution $\omega(u,v)$ to the sinh-Gordon equation from $(y(u),z(u))$. To start, let $\varrho_0 := 1/(\alfa\gamma)>0$ denote the smallest positive root of $p(x)$, and define $\omega$ along $u=0$ as $e^{\omega(0,v)}=x(v)$, where $x(v)$ is the unique non-constant analytic solution to the ODE 
\begin{equation}\label{omini}
4 x'(v)^2= p(x(v)),
\end{equation}
with the initial condition $x(0)=  \varrho_0.$ Alternatively, $x(v)$ can be defined as the unique solution to $8x''(v)=p'(x(v))$ with initial conditions $x(0)=\varrho_0$, $x'(0)=0$. Observe that $x(v)$ is a
 periodic function on $\R$ that takes values in $[\varrho_0,\varrho_1]$, where $\varrho_1:=\alfa/\gamma$ is the other positive root of $p(x)$. 
Next, define $\omega(u,v)$ for any $(u,v)\in \R^2$ from these initial values $\omega(0,v)$ by imposing the relation
\begin{equation}\label{omu}
\omega_u = y(u) \cosh \omega + z(u)\sinh \omega,
\end{equation}
which can be seen as a Riccati ODE.
This process clearly determines a unique real analytic function $\omega(u,v)$, and we have:

\begin{lemma}\label{lem:gordon}
$\omega$ is a solution to the sinh-Gordon equation
\begin{equation}\label{sinhg}
\Delta \omega +\sinh \omega  \cosh \omega =0.
\end{equation}
\end{lemma}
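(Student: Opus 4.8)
The plan is to reduce (\ref{sinhg}) to a first-order transport statement along the $u$-curves, where the Riccati relation (\ref{omu}) already lives, and to identify $\omega_v^2$ explicitly as a function of $u$ and $\omega$. Writing $\omega_u=y\cosh\omega+z\sinh\omega$ and differentiating (\ref{omu}) in $u$ gives
\[
\omega_{uu}=y'\cosh\omega+z'\sinh\omega+\tfrac12(y^2+z^2)\sinh 2\omega+yz\cosh 2\omega ,
\]
while differentiating (\ref{omu}) in $v$ yields $\omega_{uv}=(y\sinh\omega+z\cosh\omega)\,\omega_v$, hence the transport law $\partial_u(\omega_v^2)=2(y\sinh\omega+z\cosh\omega)\,\omega_v^2$, which carries the value of $\omega_v^2$ from $u=0$ to all of $\R^2$.

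Next I would compute the initial profile. Along $u=0$, relation (\ref{omini}) gives $\omega_v^2=p(e^\omega)/(4e^{2\omega})$; expanding $p$ from (\ref{def:pab}), using that its four roots have product $1$, and rewriting the coefficients through $A,B,C$ from (\ref{const2}) (the substitution $\gamma=e^{\sigma}$ with $\sinh\sigma=C$ turns the relevant combinations into $\cosh(\omega\pm\sigma)$), together with the initial data (\ref{inicondi}) I expect to obtain $\omega_v^2=-\hat a-\sinh^2\omega-2z'(0)\cosh\omega-2y'(0)\sinh\omega$ on $\{u=0\}$. Reading the target identity $\omega_{vv}=-\omega_{uu}-\sinh\omega\cosh\omega$ as a prescription for $\tfrac12\,\partial_\omega(\omega_v^2)$ and antidifferentiating in $\omega$, I would form the candidate
\[
\psi(u,\omega):=-2y'\sinh\omega-2z'\cosh\omega-\tfrac12(y^2+z^2+1)\cosh 2\omega-yz\sinh 2\omega+2g(u),
\]
with $g(u):=\tfrac34(y^2-z^2)+\tfrac14-\tfrac{\hat a}{2}$, designed so that $\tfrac12\psi_\omega=-\omega_{uu}-\sinh\omega\cosh\omega$ identically and $\psi$ matches the above profile at $u=0$.

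The core of the argument is to check that $\psi$ obeys the same transport equation as $\omega_v^2$, namely $\psi_u+\psi_\omega\,\omega_u=2(y\sinh\omega+z\cosh\omega)\,\psi$. Expanding every product into the harmonics $1,\cosh\omega,\sinh\omega,\cosh 2\omega,\sinh 2\omega,\cosh 3\omega,\sinh 3\omega$, I expect the second- and third-order harmonics to cancel identically, the $\cosh\omega$ and $\sinh\omega$ harmonics to vanish exactly by the two equations of system (\ref{system1}) (this is precisely where the value $\hat a=1-AB+C^2$ and the cubic terms $-2y(y^2-z^2)$, $-2z(y^2-z^2)$ are needed), and the constant harmonic to vanish thanks to the choice of $g$. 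This is the step I expect to be the main obstacle: a lengthy trigonometric bookkeeping in which the precise shape of $p$, the constants (\ref{const2}), and the nonlinearity of (\ref{system1}) must conspire.

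Finally, set $\Phi:=\omega_v^2-\psi(u,\omega)$. By the second paragraph $\Phi\equiv0$ on $\{u=0\}$, and combining the transport law for $\omega_v^2$ with the one just verified for $\psi$ gives $\partial_u\Phi=2(y\sinh\omega+z\cosh\omega)\,\Phi$, a linear homogeneous ODE in $u$ for each fixed $v$; uniqueness forces $\Phi\equiv0$, i.e. $\omega_v^2=\psi(u,\omega)$ throughout $\R^2$. Differentiating this identity in $v$ then yields $\omega_{vv}=\tfrac12\psi_\omega=-\omega_{uu}-\sinh\omega\cosh\omega$, which is exactly (\ref{sinhg}); the isolated points where $\omega_v=0$ cause no trouble by real analyticity.
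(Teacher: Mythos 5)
Your proposal is correct and is essentially the paper's own proof, just written in the variable $\omega$ instead of $X=e^{\omega}$: your candidate $\psi(u,\omega)$ is exactly the paper's quartic $\phi(u,X)/(4X^2)$ expressed in hyperbolic harmonics, your transport law $\psi_u+\psi_\omega\,\omega_u=2(y\sinh\omega+z\cosh\omega)\,\psi$ is equivalent to the paper's identity $\phi_u=2(y+z)X\phi$, and the closing step (match at $u=0$ via \eqref{omini}, propagate by the linear ODE in $u$, then differentiate in $v$) is the same. The bookkeeping you flag as the main obstacle does close exactly as you predict: the second and third harmonics cancel identically and the first harmonics vanish precisely by the two equations of \eqref{system1} together with your choice of $g$.
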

\begin{proof}
Let $X(u,v):=e^{\omega(u,v)}$, and note that, by \eqref{omu},
\begin{equation}\label{omux}
2X_u =  y(u) (X^2 +1)+ z(u) (X^2-1).
\end{equation}
Define in terms of $X,y(u),z(u)$ the function $\phi(u,v)$ given by
\begin{equation}\label{pfi}
\phi:= -(1+(y+z)^2)X^4 -4 (y'+z') X^3 + 6 \hat{\gamma}  X^2+ 4 (y'-z') X - (1+(y-z)^2),
\end{equation} 
where here $\hat{\gamma}=\hat{\gamma}(u)$ is given by $6\hat{\gamma}:= 6(y^2-z^2)-4(\hat{a}-1/2)$.  Using \eqref{inicondi},  a computation shows that 
\begin{equation}\label{fiu}
\phi(0,v)=p(x(v)).
\end{equation} 
Let us check that $X$ satisfies the differential equation 
\begin{equation}\label{xv}
4X_v^2=\phi. 
\end{equation}
First, a direct  computation from \eqref{system1} and \eqref{omux} gives
$$ \phi_u= 2(y+z)  X \phi $$
On the other hand, differentiating \eqref{omux} with respect to $v$ we get
$$ X_{uv}= (y+z) X X_v.$$ 
Comparing these two expressions and integrating with respect to $u$, we obtain 
$X_v^2 = F(v) \phi$
 for some function $F(v)$.  Evaluating at $u=0$ and using  \eqref{omini}, \eqref{fiu}, we deduce that $F(v)=1/4$, and so \eqref{xv} holds, as desired. If we now differentiate \eqref{xv} with respect to $v$, as well as \eqref{omux} with respect to $u$, a computation using \eqref{pfi}, \eqref{xv} shows that $$\Delta X= \frac{X}{4}(-X^2+1/X^2) + \frac{X_u^2 +X_v^2}{X}.$$ Noting that $X=e^{\omega}$, we obtain \eqref{sinhg}.
\end{proof}

It follows from Lemma \ref{lem:gordon} and the fundamental theorem of surface theory that there exists a unique (up to ambient isometries) conformal immersion $\psi(u,v):\R^2\flecha \R^3$ with first and second fundamental forms given by 
\begin{equation}\label{fforms}
I= e^{2\omega} |d\zeta|^2, \hspace{1cm} II = e^{\omega}(\cosh \omega \, du^2 + \sinh \omega \, dv^2),
\end{equation}
where $\zeta=u+iv$. Indeed, $I,II$ satisfy the Gauss-Codazzi equations, by \eqref{sinhg}. We call this surface $\Sigma$. It has constant mean curvature $H=1/2$, and its Hopf differential is $Q=\esiz \psi_{\zeta \zeta},N\esde = 1/4$, where $N$ is the unit normal of $\psi$. The parametrization $\psi(u,v)$ is by curvature lines, and the principal curvatures $\kappa_1> \kappa_2$ of $\Sigma$ are 
\begin{equation}\label{princur}
\kappa_1 = e^{-\omega} \cosh \omega, \hspace{1cm} \kappa_2 =e^{-\omega} \sinh \omega.
\end{equation}
Note that $\kappa_1>0$, and that the sign of $\kappa_2$ coincides with the sign of $\omega$.  

\begin{definition}\label{def:Sigma}
We denote by $\Sigma$ the (unique) surface $\psi(u,v):\R^2\to\R^3$ with first and second fundamental forms given by \eqref{fforms} and initial conditions on its moving frame given by:
\begin{equation}\label{inimovi}
\psi(0,0)=(0,0,0), \hspace{0.3cm} N(0,0)=(1,0,0), \hspace{0.3cm} \frac{\psi_u(0,0)}{|\psi_u (0,0)|} = (0,0,1).
\end{equation}
\end{definition}




A fundamental property of $\Sigma$ for our purposes is given by the next result.

\begin{lemma}\label{lem:esfecur}
For each fixed $u\in \R$, the curvature line $v\mapsto \psi(u,v)$ of $\Sigma$ is \emph{spherical}, i.e., it is contained in a sphere or a plane $\mathbf{S}_u$ of $\R^3$.
\end{lemma}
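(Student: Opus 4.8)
The plan is to show that each curvature line $v\mapsto\psi(u,v)$ lies in a sphere by producing, for each fixed $u$, a fixed point $c_u\in\R^3$ and a fixed radius $r_u$ such that $|\psi(u,v)-c_u|^2=r_u^2$ for all $v$. The natural candidate comes from the classical characterization of spherical curves: a curve in a principal direction is spherical precisely when a certain linear combination of the position vector and the normal is constant along it. Concretely, since the parametrization is by curvature lines, I would look for a function of the form
\begin{equation}\label{plan:sphere}
\Phi(u,v):= \psi(u,v) + f(u,v)\,N(u,v)
\end{equation}
and try to choose $f$ so that $\Phi_v\equiv 0$ for each fixed $u$; then $\Phi(u,v)=c_u$ is the desired center, and one checks $|\psi-c_u|$ is constant in $v$.

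First I would compute $\Phi_v$ using the structure equations for a curvature-line parametrization. Writing the first and second fundamental forms as in \eqref{fforms}, the Weingarten relation gives $N_v=-\kappa_2\,\psi_v$ with $\kappa_2=e^{-\omega}\sinh\omega$ from \eqref{princur}. Hence
\begin{equation}\label{plan:phiv}
\Phi_v=\psi_v+f_v\,N+f\,N_v=(1-f\kappa_2)\psi_v+f_v\,N.
\end{equation}
For $\Phi_v$ to vanish I need both $f_v=0$ (so $f=f(u)$ depends only on $u$) and $1-f(u)\kappa_2=0$ along the curve, i.e. $f(u)=1/\kappa_2=e^{\omega}/\sinh\omega$. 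The obstruction is that $\kappa_2$ is a genuine function of $v$ in general, so demanding $f(u)\kappa_2\equiv 1$ cannot hold with $f$ independent of $v$. This tells me that the naive "constant offset along the normal'' ansatz is too rigid, and the center of the sphere is not simply $\psi+\tfrac{1}{\kappa_2}N$.

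The correct approach is to use the full freedom of a spherical curve: a curve lies on a sphere iff it satisfies a third-order condition, but for a curvature line it is cleaner to seek the center as $c=\psi+aN+b\,T$ where $T=\psi_v/|\psi_v|$ is the unit tangent to the $v$-curve, allowing the offset to have a tangential component as well. I would therefore differentiate $g(u,v):=|\psi(u,v)-c(u)|^2$ in $v$ for a suitably chosen $c(u)$, and reduce the spherical condition to a system of ODEs in $v$ (for fixed $u$) involving $\omega$, $\omega_v$, and $\omega_{vv}$. The heart of the matter is that along $u$-fixed slices, equation \eqref{xv}, namely $4X_v^2=\phi$ with $X=e^\omega$ and $\phi$ the explicit quartic \eqref{pfi}, provides a first integral relating $\omega_v$ algebraically to a polynomial in $e^\omega$. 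I expect the main obstacle to be algebraic: I must verify that when the center candidate is built from the coefficients of $\phi$, the $v$-derivative of $g$ vanishes identically as a consequence of \eqref{xv} and the sinh-Gordon equation \eqref{sinhg}. This is the step where the specific quartic structure of $p$ in \eqref{def:pab} is essential, since the roots of $p$ encode the radius and center of the sphere $\mathbf{S}_u$.

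Concretely, I would carry out the following steps in order. Step one: derive the frame equations $\psi_{vv}$, $N_v$ in terms of $\psi_u,\psi_v,N$ and the metric/second-form coefficients, using \eqref{fforms} and the Christoffel symbols for the conformal metric $e^{2\omega}|d\zeta|^2$. Step two: posit the center in the form $c(u)=\psi+\lambda\,N+\mu\,\psi_v$ with $\lambda,\mu$ functions to be determined, impose $c_v=0$, and solve the resulting two scalar equations for $\lambda,\mu$; here I expect $\lambda$ and $\mu$ to be expressible through $\omega$ and $\omega_v$, with the consistency of the solution hinging on \eqref{xv}. Step three: confirm that the so-determined $c(u)$ is genuinely independent of $v$ by differentiating once more and invoking \eqref{sinhg} together with \eqref{xv} to cancel the remaining terms; the degenerate case where the sphere becomes a plane (radius $\to\infty$) corresponds to $\lambda\to\infty$, i.e. $\sinh\omega=0$, and I would treat it as a limiting/continuity case so that $\mathbf{S}_u$ is a plane exactly where $\kappa_2$ vanishes. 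The anticipated difficulty throughout is purely the bookkeeping of showing that the explicit first integral \eqref{xv} forces the candidate center and radius to be $v$-independent; once that identity is exploited, sphericity follows, and the phrase "sphere or plane'' in the statement is accounted for by the vanishing-$\kappa_2$ locus.
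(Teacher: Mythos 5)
There is a genuine gap, and it sits precisely at the step you call ``the correct approach.'' Your enlarged ansatz $c=\psi+\lambda N+\mu\,\psi_v$ cannot repair the failure of $c=\psi+\kappa_2^{-1}N$, because the tangential direction you added is the wrong one. If $c$ is a $v$-independent point with $|\psi-c|$ constant in $v$, then differentiating $|\psi-c|^2$ gives $\langle\psi-c,\psi_v\rangle\equiv 0$: the offset $c-\psi$ can have \emph{no} component along the curve's own tangent $\psi_v$. Concretely, using the structure equations of \eqref{fforms} (namely $\psi_{vv}=-\omega_u\psi_u+\omega_v\psi_v+e^{\omega}\sinh\omega\,N$ and $N_v=-\kappa_2\psi_v$), the $\psi_u$-component of $c_v$ equals $-\mu\,\omega_u$, so imposing $c_v=0$ forces $\mu\equiv 0$ wherever $\omega_u\neq 0$, and you are back to the ansatz you already showed to be inconsistent. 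The direction that must be added is the \emph{other} principal direction $\psi_u$ (the conormal of the $v$-curve). Indeed, on $\{y(u)\neq 0\}$ the center is
\begin{equation*}
\hat{c}\;=\;\psi-\frac{1}{y}\,e^{-\omega}\psi_u+\frac{y-z}{y}\,N,
\end{equation*}
where $y(u),z(u)$ are the coefficient functions appearing in \eqref{omu}. The whole point is that these coefficients depend on $u$ alone: a short computation gives $\langle\hat c_v,\psi_u\rangle=\langle\hat c_v,N\rangle=0$ and $\langle\hat c_v,\psi_v\rangle=e^{\omega}\bigl(e^{\omega}-\tfrac{1}{y}\omega_u-\tfrac{y-z}{y}\sinh\omega\bigr)$, which vanishes identically \emph{because of} \eqref{omu}. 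Since $e^{-\omega}\psi_u$ and $N$ are orthonormal and the coefficients are functions of $u$ only, the radius $R^2=(1+(z-y)^2)/y^2$ is automatically $v$-independent, so no separate verification is needed.

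This also corrects two further misdiagnoses in your plan. First, the identity driving the proof is the Riccati relation \eqref{omu}, with its $v$-independent coefficients $y(u),z(u)$ --- not the first integral \eqref{xv}, the quartic $p$, or the sinh-Gordon equation \eqref{sinhg}; none of these enters the argument at all. Second, the planar curvature lines are not those where $\kappa_2$ vanishes: $\kappa_2=e^{-\omega}\sinh\omega$ is a nonconstant function of $v$ along each fixed curvature line, so $\sinh\omega=0$ is a pointwise condition that cannot single out a whole curve. The plane case is governed by the $u$-dependent condition $y(u)=0$ (where $R\to\infty$): either $y\equiv 0$, which by \eqref{inicondi} happens exactly when $\gamma=1$ and makes every $v$-curve planar with fixed normal $\eta=e^{-\omega}\psi_u+zN$, or $y$ has isolated zeros, at which the corresponding individual curvature lines are planar.
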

\begin{proof}  

We first consider the case where $y\equiv 0$ (that, by \eqref{inicondi}, is equivalent to $\gamma=1$), and show that in this case all the curvature lines $v\mapsto \psi(u,v)$ are planar. Indeed, using \eqref{omu} and \eqref{fforms} it is straightforward to check that the vector $\eta:=e^{-\omega}\psi_u + z N$ does not depend on $v$. Thus, since $\esiz \psi_v,\eta\esde=0$, there exists $d=d(u)\in\R$ such that $\psi(u,v)$ is contained in the plane $\mathbf{S}_u = \{ q\in\R^3\,:\, \langle q,\eta(u)\rangle = d(u)\}$. 

Assume now that $y\not\equiv 0$. In particular, by \eqref{system1}, $y(u)$ can only have isolated zeroes.  Define now the function $\hat{c}=\psi - \frac{1}{y}e^{-\omega}\psi_u + \frac{y-z}{y}N$  on the open set $\{(u,v) \,:\, y(u)\neq 0\}$.  Let us check that $\hat{c}$ only depends on the variable $u$. 

Indeed, a direct computation using \eqref{fforms} shows that $\langle \hat{c}_v,\psi_u \rangle = \langle \hat{c}_v, N \rangle = 0$. Also, using \eqref{omu}, we have 
$$
\langle \hat{c}_v,\psi_v \rangle =  e^{\omega} \big( e^{\omega} - \frac{1}{y}\omega_u  - \frac{y-z}{y} \sinh\omega \big) = 0.$$
Thus,  $\hat{c} =\hat{c}(u)$ and we have  
\begin{equation}\label{gencen}
\hat{c}-\psi = -\frac{1}{y}e^{-\omega}\psi_u + \frac{y-z}{y} N.
 \end{equation}
This implies that for any $u\in\R$ with $y(u)\neq 0$, the curvature line $v\mapsto \psi(u,v)$ lies in a sphere ${\bf S}_u$ of center $\hat{c}(u)$ and radius $R=R(u)$ given by $R^2=(1+(z-y)^2)/y^2$.  At the values $u\in \R$ where $y(u)=0$, the radius become infinite and the curvature line $\psi(u,v)$ lies in a plane.   
\end{proof}

\begin{remark}\label{re:radan}
By Joachimsthal's theorem, $\Sigma$ intersects the sphere (or plane) $\mathbf{S}_u$ along $\psi(u,v)$ with a constant angle. It follows from the proof of Lemma \ref{lem:esfecur} that the values of $y,z$ are connected to the values of the radius $R$ and the intersection angle $\theta$ by 
\begin{equation}\label{radan}
R^2 = \frac{1+(z-y)^2}{y^2}, \hspace{1cm} \tan \theta=\frac{-1}{y-z}.
\end{equation}
Note that $R=\8$ if and only if $y=0$, and $\cos \theta=0$ if and only if $y=z$.

As a consequence,  and taking into account the initial conditions \eqref{inimovi}, the curvature line $\psi(0,v)$ lies the $x_3=0$ plane, and $\Sigma$ intersects this plane orthogonally along $\psi(0,v)$. In particular, $\Sigma$ is symmetric with respect to $x_3=0$.
\end{remark}

\begin{lemma}[\cite{W}]\label{lem:wen}
The centers $\hat{c}(u)$ of the spheres $\mathbf{S}_u$ lie in a common line $L$ of $\R^3$ parallel to $\psi_u(0,0)$; that is, in our case, $L$ is vertical. Moreover,   
\begin{equation}\label{eq:wen}
\hat c'(u)= \Big(0,0,  \frac{y'(0)}{y(u)^2}  \Big).
\end{equation}
\end{lemma}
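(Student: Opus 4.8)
The plan is to differentiate the center $\hat{c}(u)$ directly, exploiting that by Lemma \ref{lem:esfecur} it depends only on $u$, so that computing $\hat{c}_u$ along any single curvature line already yields the genuine derivative $\hat{c}'(u)$. Writing $\hat{c}=\psi-\frac{1}{y}e^{-\omega}\psi_u+\frac{y-z}{y}N$ as in \eqref{gencen} and applying the Gauss--Weingarten equations for the curvature-line parametrization $I,II$ of \eqref{fforms} (in particular $\psi_{uu}=\omega_u\psi_u-\omega_v\psi_v+e^{\omega}\cosh\omega\,N$ and $N_u=-\kappa_1\psi_u$ from \eqref{princur}), I would obtain $\hat{c}_u=A_1\psi_u+A_2\psi_v+A_3N$, where a short cancellation of the $\omega_u$-terms using \eqref{omu} gives
$$A_1=1+\frac{y'}{y^2}e^{-\omega}-\frac{y-z}{y}e^{-\omega}\cosh\omega,\quad A_2=\frac{e^{-\omega}\omega_v}{y},\quad A_3=-\frac{\cosh\omega}{y}+\frac{zy'-z'y}{y^2}.$$

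To pin down this (a priori frame-dependent) expression as a fixed ambient vector, I would evaluate it along $v=0$. Since $x(v)$ solves \eqref{omini} with $x'(0)=0$ it is even in $v$, and because the $u$-evolution \eqref{omu} has $v$-independent coefficients, $\omega$ is even in $v$; hence $\omega_v(u,0)\equiv 0$ (so $A_2=0$ there) and, by \eqref{xv}, $X_v(u,0)\equiv 0$, which forces the quartic identity $\phi(e^{\omega(u,0)};u)\equiv 0$ from \eqref{pfi}--\eqref{fiu}. The reflection across the fixed plane of the ambient isometry realizing $v\mapsto -v$ shows that the curve $\psi(\cdot,0)$ and the normal $N(\cdot,0)$ both lie in the $x_1x_3$-plane (and that every center satisfies $\hat{c}_2\equiv 0$). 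Thus along $v=0$ I may set $X_0:=e^{\omega(u,0)}$, write the planar unit frame as $T=e^{-\omega}\psi_u=(\sin\varphi,\cos\varphi)$, $N=(\cos\varphi,-\sin\varphi)$ with $\varphi'=\cosh\omega_0=\frac{X_0^2+1}{2X_0}$ coming from $N_u=-\kappa_1\psi_u$, and record $V:=\hat{c}'(u)=A_1X_0\,T+A_3\,N$.

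The crux is to show that $V$ has constant direction. Differentiating $V$ and using $T'=\varphi'N$, $N'=-\varphi'T$, I would reduce the claim to the two scalar identities
$$(A_1X_0)'-\varphi'A_3=-\frac{2y'}{y}\,A_1X_0,\qquad A_3'+\varphi'A_1X_0=-\frac{2y'}{y}\,A_3,$$
which, once established, give $V'=-\frac{2y'}{y}V$ and hence $\hat{c}'(u)=K/y(u)^2$ for a constant vector $K$ --- so the centers indeed lie on a single line $L$ parallel to $K$. I expect verifying these two identities to be the main obstacle: it is a direct but lengthy computation that differentiates the explicit $A_1X_0,A_3$ using the second-order system \eqref{system1} for $y'',z''$, the Riccati relation $2X_0'=(y+z)X_0^2+(y-z)$ from \eqref{omux}, and the quartic relation $\phi(X_0;u)\equiv0$ established above (the initial conditions \eqref{inicondi} being exactly what makes these fit together).

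Finally I would identify $K$. Since $K=\lim_{u\to0}y(u)^2V(u)$ is independent of $u$, I would compute the limit using $y\sim y'(0)u$, $z\sim z'(0)u$, $X_0\to\varrho_0$, $\varphi\to0$, together with the initial frame \eqref{inimovi}: the $T$-component $y^2A_1X_0\to y'(0)$ while the $N$-component $y^2A_3\to0$, so that $K=(0,0,y'(0))$. This yields \eqref{eq:wen} and shows $L$ is vertical, i.e. parallel to $\psi_u(0,0)$, completing the proof.
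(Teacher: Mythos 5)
The paper offers no proof of this lemma at all: it is quoted directly from Wente \cite{W}, so there is no internal argument to compare yours with. Your proposal is therefore necessarily a different route, namely a self-contained verification, and after checking it I believe it is correct. The frame computation giving $A_1,A_2,A_3$ is right (the $\omega_u$-terms do cancel), the symmetry argument for $\omega_v(u,0)\equiv 0$ and for $\psi(\cdot,0),N(\cdot,0)$ lying in the $x_1x_3$-plane is the same reflection argument the paper itself uses in Proposition \ref{pro:sim}, and your observation that $X_v(u,0)\equiv 0$ forces the quartic constraint $\phi(e^{\omega(u,0)};u)\equiv 0$ via \eqref{xv} is exactly the right extra input. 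I carried out the two scalar identities you flag as the crux: writing $c=\cosh\omega$, $s=\sinh\omega$ along $v=0$, the second identity reduces (using only $zy''-yz''=-yz$ from \eqref{system1} and the Riccati relation $\omega_u=yc+zs$) to $z(c^2-s^2-1)=0$, and the first identity reduces, after eliminating $2y's+2z'c$ via the quartic constraint, to $(y^2+1)(c^2-s^2-1)=0$; both vanish identically, so indeed $V'=-\frac{2y'}{y}V$ and $y^2V$ is constant. One point you should make explicit: the ODE argument for constancy of $y^2V$ is only valid on each component of $\{y\neq 0\}$, while your limit identifying $K$ is taken at $u=0$, where $y$ vanishes. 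This is easily repaired, and in a way that strengthens the conclusion: $y^2A_1X_0=y^2X_0+y'-y(y-z)c$ and $y^2A_3=-yc+zy'-yz'$ involve no division by $y$, so $y^2V$ is real-analytic on all of $\R$; being locally constant off the discrete zero set of $y$, it is globally constant, which both legitimizes evaluating the constant at $u=0$ (giving $K=(0,0,y'(0))$ from \eqref{inicondi} and \eqref{inimovi}) and shows that the centers corresponding to different components of $\{y\neq0\}$ lie on one and the same vertical line $L$, which is what the lemma asserts. What your approach buys is precisely that the paper's black-box citation of \cite{W} could be replaced by a page-long internal computation using only \eqref{system1}, \eqref{omu}, \eqref{pfi}--\eqref{xv} and \eqref{gencen}.
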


\subsection{Free boundary nodoids}\label{sec:nodoid}


We finish this section by discussing the case in which $\alpha=1$ and $\gamma>1$ in the above construction.  In this situation, the polinomial $p(x)$ in \eqref{def:pab} has a positive double root at $\varrho_0=1/\gamma<1$, and \eqref{omini} implies that $\omega(0,v)\equiv \log\varrho_0$ for all $v\in \R$. By \eqref{omu} and \eqref{inicondi}, we obtain that $\omega_u$ vanishes along $u=0$.  
By uniqueness to the Cauchy problem for \eqref{sinhg}, we have $\omega(u,v)=\hat{\omega}(u)$, where $\hat{\omega}(u)$ is the solution to 
\begin{equation}\label{hatode}
\left\{\def\arraystretch{1.5}\begin{array}{l}\hat\omega'' + \sinh \hat\omega \cosh \hat\omega=0, \\ \hat\omega(0)=\log\varrho_0, \ \hat\omega'(0)=0.
\end{array}\right.
\end{equation}
Thus,  the first and the second fundamental forms of $\Sigma$ depend only on $u$,  and the surface is invariant under a $1$-parameter group of rigid motions of $\R^3$. Since the curves $v\mapsto \psi(u,v)$ are spherical, the surface must be rotationally invariant, and every such curve infinitely covers a circle in a horizontal plane. That is, $\Sigma_{\gamma}:=\psi(\R^2)\subset \R^3$ is a Delaunay surface with vertical axis and profile curve given by $u\mapsto \psi(u,0)$ (see the final comment in Remark \ref{re:radan}). 
The principal curvature associated to the profile curve is $\kappa_1 = \kappa_1(u) = e^{-\omega} \cosh \omega >0$,  which implies  that $\Sigma_{\gamma}$ must be a nodoid. Moreover, the nodoid is determined by the value of $\gamma>1$. Indeed, the planar geodesic $\psi(0,v)$, which has negative curvature $\kappa_2(0)<0$, describes the \emph{neck} of the nodoid and is a circle in the $x_3=0$ plane of radius 
\begin{equation}\label{necksize}
r_\gamma=\frac{1}{|\kappa_2 (0)|} = \frac{2}{\gamma^2-1}.
\end{equation}
This quantity gives the \emph{necksize} of the nodoid $\Sigma_{\gamma}$.

\begin{figure}
\begin{center}
\includegraphics[height=6cm]{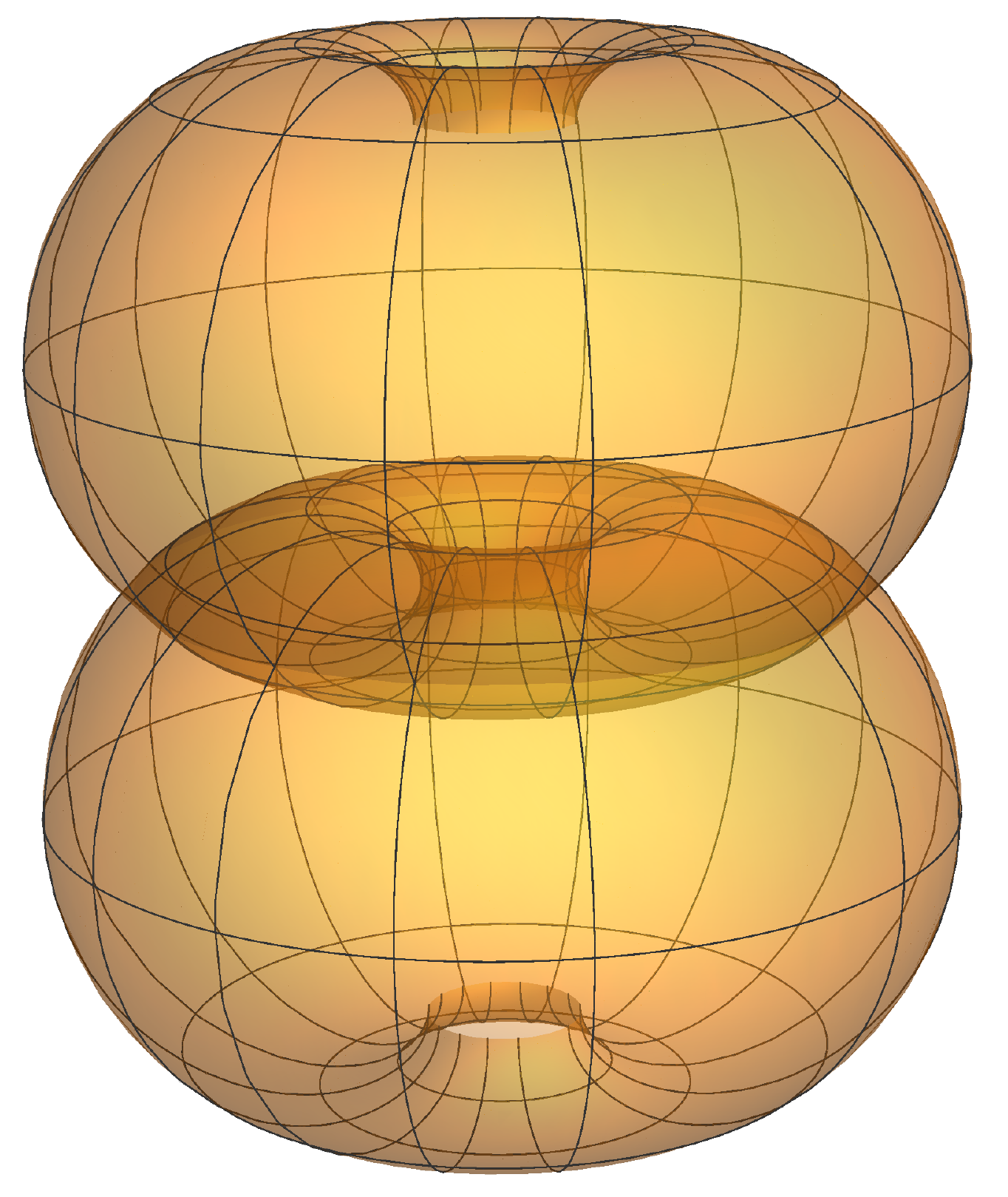}  \hspace{1cm}
\includegraphics[height=6cm]{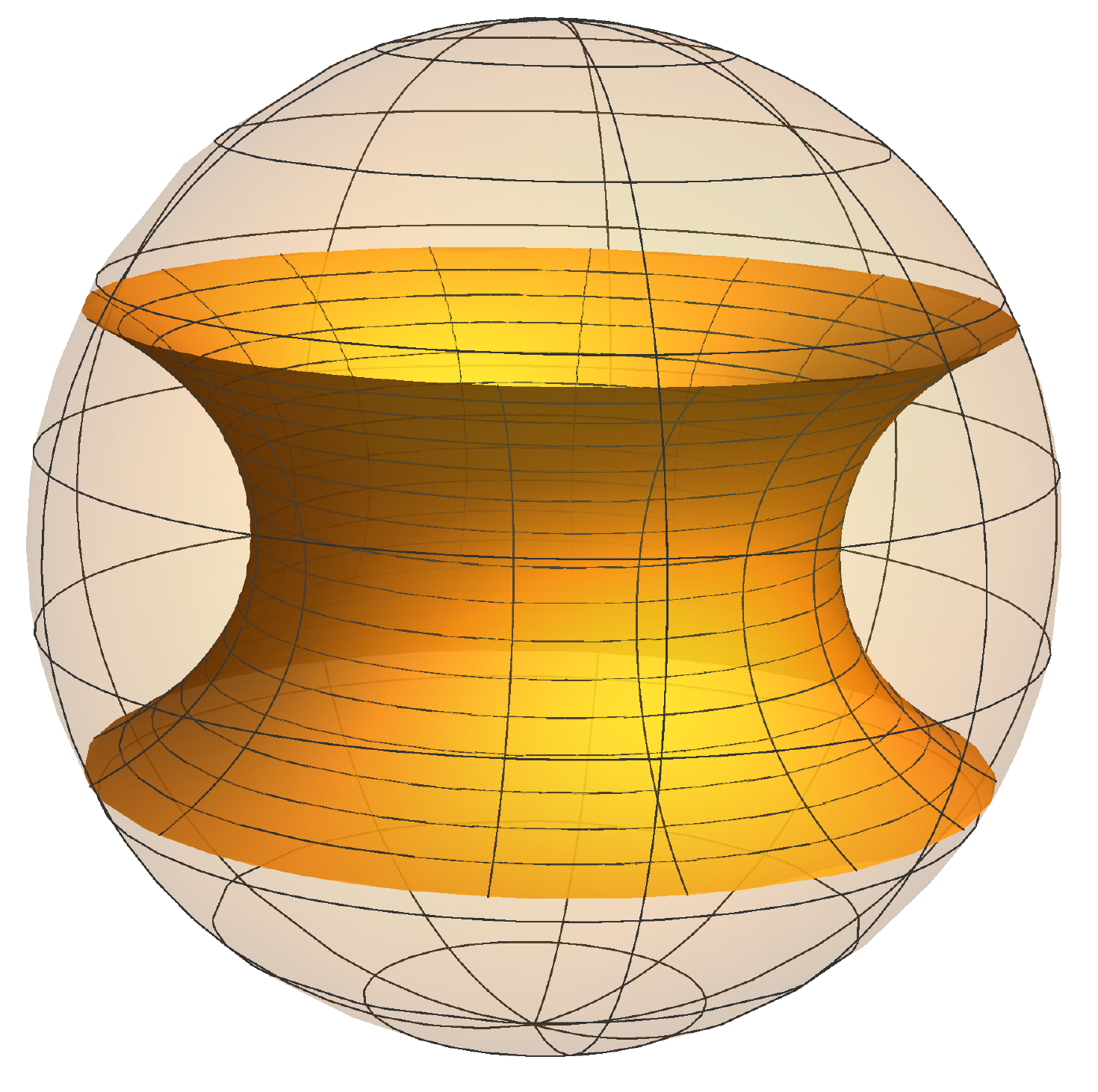} 
\caption{A nodoid $\Sigma_{\gamma}$ and its embedded free boundary compact piece ${\bf N}_{\gamma}$ in a ball.}\label{fig:eje1}
\end{center}
\end{figure}

It is clear that, for any $\gamma>1$, the nodoid $\Sigma_{\gamma}$ described above has a compact embedded piece ${\bf N}_{\gamma}$ that lies inside a ball of $\R^3$, and intersects the boundary of this ball orthogonally along $\parc {\bf N}_{\tau}$. See Figure \ref{fig:eje1}. More specifically, with respect to the parametrization $\psi(u,v)$ of the universal cover of the (complete) nodoid $\Sigma_{\gamma}$ above, let $\bar{u}>0$ be the first value where the tangent vector of its profile curve $u\mapsto \psi(u,0)$ is vertical. That is, $\psi(\bar{u},v)$ parametrizes a \emph{waist} of the nodoid $\Sigma_{\gamma}$. Then, since $\psi(u,0)$ has positive curvature and $\psi(0,v)$ parametrizes the \emph{neck} of $\Sigma_{\gamma}$, it is clear that there exists a unique value $\delta\in (0,\bar{u})$ such that the tangent line to $\psi(u,0)$ at $u=\delta$ passes through the intersection point $p_{\gamma}$ of the rotation axis of $\Sigma_{\gamma}$ and the $x_3=0$ plane where the neck of $\Sigma_{\gamma}$ is contained. In this situation, the restriction of the image of $\psi(u,v)$ to  $[-\delta,\delta]\times \R$ defines a compact, embedded rotational $H=1/2$ annulus ${\bf N}_{\gamma}$ contained in the ball $B_{\gamma}$ of $\R^3$ with center $p_{\gamma}$ and radius $|\psi(\delta,0)|$, and that intersects $\parc B_{\gamma}$ orthogonally along its boundary.

\begin{definition}\label{def:fbnodoid}
For any $\gamma>1$, we call ${\bf N}_{\gamma}$ the \emph{embedded free boundary nodoid} with $H=1/2$ and necksize $r_{\gamma}$, where $r_{\gamma}$ is given by \eqref{necksize}.\end{definition}

We remark that the unit normal of ${\bf N}_{\gamma}$ points \emph{outwards}, and that, by \eqref{necksize}, for any $\nu>0$ there is a unique embedded free boundary nodoid ${\bf N}_{\gamma}$ with necksize $r_{\gamma}=\nu$. Also, after a homothety, each ${\bf N}_{\gamma}$ determines a rotational embedded free boundary annulus in the unit ball $\B^3$, with constant mean curvature $H_{\gamma}=|\psi(\delta,0)|/2>0$.

For a more detailed description of rotational free boundary CMC annuli in $\B^3$, see \cite{BPS}.

\section{Symmetries and the period problem}\label{sec:3}

In this section we fix $(\alfa,\beta,\gamma)\in \cO$, with $\alfa>1$, and denote by $\Sigma=\Sigma(\alfa,\beta,\gamma)$ the CMC surface constructed in Section \ref{sec:2}. We will show that $\Sigma$ has a number of vertical planes of symmetry, and study when its spherical curvature lines are closed. To start, we show:

\begin{proposition}\label{pro:sim}
There exists $\sigma>0$ such that, for each $k\in \Z$, the curvature line $u\mapsto \psi(u,k\sigma)$ of $\Sigma$ lies in a vertical plane $\Omega_k$ of $\R^3$, and $\Sigma$ intersects $\Omega_k$ orthogonally along $\psi(u,k\sigma)$. This number $\sigma$ can be computed as
\begin{equation}\label{ecusigma}
\sigma = \int_{\varrho_0}^{\varrho_1} \frac{2}{\sqrt{p(x)}}dx,
\end{equation}
where $\varrho_0:=1/(\alfa \gamma)<\varrho_1:=\alfa/\gamma $ are the two positive roots of $p(x)$.

Moreover, the vectors $\eta_k:= \psi_v(0,k\sigma)$ are orthogonal to $\Omega_k$, and if we orient the planes $\Omega_k$ by $\eta_k$, the angle between $\Omega_k$ and $\Omega_{k+1}$ is independent of $k$, and given by 
\begin{equation}\label{ecutheta2}
\Theta= \int_{\varrho_0}^{\varrho_1} \frac{ x-1/x}{\sqrt{p(x)}} dx.
\end{equation}

The value $\Theta$ in \eqref{ecutheta2} lies in $(-\pi,\pi)$ and coincides with the total variation of the unit tangent vector of the planar geodesic $\psi(0,v)$ of $\Sigma$ between $v=k\sigma$ and $v=(k+1)\sigma$.
%
%
\end{proposition}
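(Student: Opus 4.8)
The plan is to exhibit, for each $k$, a reflection symmetry of $\Sigma$ across the curve $u\mapsto\psi(u,k\sigma)$, and to extract all the geometric data from it. First I would pin down the role of $\sigma$. By \eqref{omini} the function $x(v)=e^{\omega(0,v)}$ solves $4x'^2=p(x)$ with $x(0)=\varrho_0$ a minimum, so $x$ oscillates between the positive roots $\varrho_0<\varrho_1$ and is even about each of its critical points. The $v$-distance from a minimum to the following maximum is $\int_{\varrho_0}^{\varrho_1}dx/x'=\int_{\varrho_0}^{\varrho_1}2\,dx/\sqrt{p(x)}$, which is precisely $\sigma$ in \eqref{ecusigma}; hence the critical points occur exactly at $v=k\sigma$, with $x'(k\sigma)=0$ and $x(k\sigma+t)=x(k\sigma-t)$, so $\omega(0,k\sigma+t)=\omega(0,k\sigma-t)$. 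Since $\omega(u,v)$ is produced from these initial values by integrating the Riccati ODE \eqref{omu} in $u$ with coefficients $y(u),z(u)$ that do not depend on $v$, uniqueness of solutions propagates the symmetry, giving $\omega(u,k\sigma+t)=\omega(u,k\sigma-t)$ for all $u$; that is, $\omega$ is invariant under the reflection $\Phi_k(u,v)=(u,2k\sigma-v)$.

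Next I would upgrade this to an ambient symmetry. Because the fundamental forms \eqref{fforms} are assembled only from $\omega,\cosh\omega,\sinh\omega$ and the even monomials $du^2,dv^2$, one has $\Phi_k^{*}I=I$ while $\Phi_k^{*}II=-II$ relative to the parametrization normal, the sign reflecting that $\Phi_k$ reverses the orientation of the $(u,v)$-domain. Composing $\psi$ with a fixed ambient reflection compensates this sign, so by the fundamental theorem of surface theory there is an orientation-reversing isometry $G_k$ of $\R^3$ with $\psi\circ\Phi_k=G_k\circ\psi$. Differentiating along the fixed line $v=k\sigma$ gives $dG_k(\psi_u)=\psi_u$ and $dG_k(\psi_v)=-\psi_v$, and since $dG_k\in O(3)$ has determinant $-1$ on the orthogonal frame $(\psi_u,\psi_v,N)$ we must have $dG_k(N)=N$. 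Thus $dG_k$ has eigenvalues $\{+1,+1,-1\}$, so $G_k$ is the reflection across the plane $\Omega_k$ through $\psi(0,k\sigma)$ spanned by $\psi_u$ and $N$. This single fact yields three of the conclusions at once: the curvature line $u\mapsto\psi(u,k\sigma)$ lies in $\Omega_k$; the surface meets $\Omega_k$ orthogonally, since its tangent plane contains the mirror-normal $\psi_v$; and $\eta_k=\psi_v(0,k\sigma)$ is orthogonal to $\Omega_k$.

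To see that $\Omega_k$ is vertical I would use that $G_k$ maps each spherical curvature line $\{\psi(u,\cdot)\}$ of Lemma \ref{lem:esfecur} to itself, hence preserves each sphere (or plane) $\mathbf{S}_u$ and fixes its center $\hat c(u)$. By Lemma \ref{lem:wen} and \eqref{eq:wen} the centers are non-constant along the vertical axis $L$, so $G_k$ fixes two distinct points of $L$ and therefore fixes $L$ pointwise; thus $L\subset\Omega_k$ and $\Omega_k$ is vertical (the degenerate planar case $\gamma=1$ is identical, with $G_k$ preserving the horizontal planes $\mathbf{S}_u$). Since the $\Omega_k$ share the line $L$, their oriented angle is measured by the horizontal normals $\eta_k=e^{\omega(0,k\sigma)}T(k\sigma)$, a positive multiple of the unit tangent $T(v)$ of the planar geodesic $\psi(0,v)$. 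Hence the angle from $\Omega_k$ to $\Omega_{k+1}$ is the net angular variation of $T$ over $[k\sigma,(k+1)\sigma]$. The turning rate of $T$ is the signed curvature of $\psi(0,v)$ times its speed, which by \eqref{princur} and \eqref{fforms} equals $\kappa_2(0,v)\,e^{\omega(0,v)}=\sinh\omega(0,v)$; integrating and substituting $dv=2\,dx/\sqrt{p}$ and $\sinh\omega=(x-1/x)/2$ gives $\int_{\varrho_0}^{\varrho_1}(x-1/x)/\sqrt{p}\,dx=\Theta$, the same value on every half-period by the symmetry of $x(v)$, which furnishes both the $k$-independence and formula \eqref{ecutheta2}.

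The one genuinely quantitative point, and the step I expect to be the main obstacle, is the bound $\Theta\in(-\pi,\pi)$: unlike everything above, it does not follow from the symmetry but requires controlling the period integral \eqref{ecutheta2}, i.e. showing that the unit tangent of $\psi(0,v)$ turns by strictly less than a half-turn over one half-period. I would obtain this by estimating the elliptic-type integral directly over $[\varrho_0,\varrho_1]$, splitting at $x=1$ where $\sinh\omega$ changes sign and comparing with the two monotone pieces, and using that $p$ vanishes simply at the endpoints so that the turning accumulates slowly near the extrema of $x$.
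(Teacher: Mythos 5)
Most of your argument is correct and runs parallel to the paper's proof: the identification of $\sigma$ from $4x'^2=p(x)$, the symmetry $\omega(u,k\sigma+t)=\omega(u,k\sigma-t)$ (your propagation through the Riccati equation \eqref{omu}, whose coefficients $y(u),z(u)$ do not depend on $v$, is in fact slightly more direct than the paper's route, which differentiates \eqref{omu} to get a linear ODE for $\omega_v$ and then invokes uniqueness of the Cauchy problem for \eqref{sinhg}), the ambient reflection $G_k$ from the fundamental theorem of surface theory, and the computation $\Theta=\int_{k\sigma}^{(k+1)\sigma}\sinh\omega(0,v)\,dv$ followed by the substitution $x=e^{\omega(0,v)}$. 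Two minor points. First, $\Phi_k^{*}II=II$, not $-II$: the form in \eqref{fforms} contains only $du^2,dv^2$ terms; the sign you mention enters through the flip of the induced normal, and your conclusion is unaffected. Second, your verticality argument ($G_k$ maps each curvature line to itself, \emph{hence} preserves $\mathbf{S}_u$ and fixes $\hat c(u)$) tacitly assumes that the sphere containing a curvature line is unique, which fails if that line were a circle; this is repairable (apply $dG_k$ to \eqref{gencen} along $v=k\sigma$, where $\psi_u$ and $N$ are fixed vectors of the mirror), but the paper's route is shorter: $\eta_k=\psi_v(0,k\sigma)$ is tangent to the planar geodesic in $\{x_3=0\}$, hence horizontal, and a plane with horizontal normal is vertical.

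The genuine gap is the step you yourself flagged and postponed: the bound $\Theta\in(-\pi,\pi)$. This is not a cosmetic addendum — without it the total variation of the tangent determines the oriented angle between $\Omega_k$ and $\Omega_{k+1}$ only modulo $2\pi$, so the last assertion of the proposition (and its later use in Corollary \ref{cor:sim}) genuinely requires it — and your sketch does not establish it. The heuristic that the turning accumulates slowly near the extrema ``because $p$ vanishes simply at the endpoints'' points the wrong way: $1/\sqrt{p}$ blows up at $\varrho_0,\varrho_1$, and although the singularities are integrable, each endpoint contributes a finite amount that could a priori push $|\Theta|$ past $\pi$. Moreover, the proposed splitting at $x=1$ is not even available for all parameters: $1\in[\varrho_0,\varrho_1]=[1/(\alpha\gamma),\alpha/\gamma]$ only when $\alpha\geq\gamma$; if $\gamma>\alpha$ the integrand has a fixed negative sign, there is no cancellation, and the bound must come from an estimate rather than from sign-splitting. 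The paper's argument is a clean factorization you should compare with: writing \eqref{ecutheta2} as in \eqref{thetaprod},
\[
\Theta=\int_{\varrho_0}^{\varrho_1} f(x)\,\frac{dx}{\sqrt{(x-\varrho_0)(\varrho_1-x)}},\qquad f(x)=\frac{x-1/x}{\sqrt{(x+\beta\gamma)(x+\gamma/\beta)}},
\]
one uses that the Chebyshev-type weight integrates to exactly $\pi$ over $[\varrho_0,\varrho_1]$, and that $f$ is increasing with $f(\varrho_1)<1$ and $f(\varrho_0)>-1$ (checked by explicit elementary estimates), so $|f|$ is bounded by a constant strictly less than $1$ on the interval and $|\Theta|<\pi$ follows at once.
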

\begin{proof}
To start, following the notations of Section \ref{sec:2}, denote $x(v):=e^{\omega(0,v)},$ and recall that $x(0)=\varrho_0$. Then, by \eqref{omini}, there exists a value $\sigma>0$ such that $x$ is strictly increasing from $[0,\sigma]$ to $[\varrho_0,\varrho_1]$. In particular, $\sigma$ is the smallest positive number at which $x(\sigma)=\varrho_1$, and $x:\R\flecha [\varrho_0,\varrho_1]$ is $2\sigma$-periodic. Also by \eqref{omini}, we obtain the symmetry condition $\omega(0,k\sigma-v)= \omega(0,k\sigma +v)$, for any $k\in \Z$. Note that $\omega_v(0,v_0)=0$ if and only if $v_0=k\sigma$ for some $k\in\Z$.

By differentiating \eqref{omu}, we have  $$\omega_{uv}= (y(u) \sinh \omega + z(u)\cosh \omega) \omega_v,$$ which can be regarded as an ODE for $\omega_v(\cdot, v_0)$, for any $v_0\in \R$. Therefore, $\omega_v(u,k\sigma)\equiv 0$ for every $k\in \Z$, since $\omega_v(0,k\sigma)=0$. By uniqueness of the Cauchy problem for \eqref{sinhg} along $v=k\sigma$, we deduce that $$\omega(u,k\sigma +v)= \omega(u,k\sigma-v).$$ This implies by \eqref{fforms} and the fundamental theorem of surface theory that there exist planes $\Omega_k$, $k\in \Z$, in $\R^3$ such that:
\begin{enumerate}
\item
The curvature line $u\mapsto \psi(u,k\sigma)$ is contained in $\Omega_k$.
\item
$\Omega_k$ intersects $\Sigma$ orthogonally along $\psi(u,k\sigma)$, and it holds 
\begin{equation}\label{simka}
\psi(u,k\sigma-v)=\Psi_k \circ \psi (u,k\sigma+v)
\end{equation}
where $\Psi_k$ is the symmetry in $\R^3$ with respect to the plane $\Omega_k$.
\end{enumerate}
In particular, the vectors $\eta_k:= \psi_v(0,k\sigma)$ are orthogonal to $\Omega_k$. Therefore, since $\psi(0,v)$ lies in the horizontal $x_3=0$ plane, we see that all the $\Omega_k$ are vertical planes in $\R^3$.

Consider now the horizontal geodesic $\Gamma(v):=\psi(0,v)$ of $\Sigma$, and let $\xi(v):=e^{-\omega(0,v)} \Gamma'(v)$ denote its unit tangent vector. If $\theta(v)$ denotes the angle between $\xi(v)$ and a fixed direction of $\R^2$, the variation of $\theta(v)$ between $v=k\sigma$ and $v=(k+1)\sigma$ is given by $$\Theta = \int_{k\sigma}^{(k+1)\sigma} \esiz \xi'(v), J \xi(v)\esde dv,$$ where $J$ denotes the $\pi/2$-rotation in the plane $x_3=0$. From \eqref{inimovi} one has $J\xi(v)= N(0,v)$, and so, since $\kappa_2=e^{-\omega}\sinh\omega$, we obtain
\begin{equation}\label{ecutheta}
\Theta=\int_{k\sigma}^{(k+1)\sigma} \sinh \omega(0,v) dv.
\end{equation} The map $x(v)= e^{\omega(0,v)}$ satisfies \eqref{omini}, and defines a diffeomorphism between $[k\sigma, (k+1)\sigma]$ and $[\varrho_0,\varrho_1]$ that is strictly increasing (resp. decreasing) if $k$ is even (resp. odd). Thus, by \eqref{omini} and \eqref{ecutheta} we obtain after the change of variables $x=x(v)$ in \eqref{ecutheta} that, for any $k\in \Z$, $\Theta$ is given by \eqref{ecutheta2}, as desired. This change of variables also gives \eqref{ecusigma}, since
$$\sigma = \int_0^\sigma dv = \int_{\varrho_0}^{\varrho_1} \frac{2}{\sqrt{p(x)}}dx.$$

Note that the (oriented) angle between the planes $\Omega_k$ and $\Omega_{k+1}$, when oriented by their unit normals $\eta_k$, coincides with the angle between $\xi(k\sigma)$ and $\xi((k+1)\sigma)$. So, if we prove that $\Theta \in (-\pi,\pi)$ in \eqref{ecutheta2}, we obtain as a consequence that $\Theta$ describes the angle between $\Omega_k$ and $\Omega_{k+1}$. We do this next.

To start, use \eqref{def:pab} and \eqref{ecutheta2} to write 
\begin{equation}\label{thetaprod}
\Theta= \int_{\frac{1}{\alfa \gamma}}^{\frac{\alfa}{\gamma}} \frac{x-1/x}{\sqrt{(x+ \beta \gamma)(x+\frac{\gamma}{\beta})}} \, \frac{1}{\sqrt{-(x-\frac{\alfa}{\gamma})(x-\frac{1}{\alfa\gamma})} }dx.
\end{equation} 
The integral of the second factor of \eqref{thetaprod} on the interval $I:=[1/(\alfa \gamma),\alfa/\gamma]$ is equal to $\pi$. We will show next the first factor is bounded between $-1$ and $1$ when restricted to $I$, what proves that $\Theta$ takes values in $(-\pi,\pi)$. First of all, it is easy to check that this first factor has positive derivative. Thus, its maximum value, attained at $x=\alfa/\gamma$, is $$\frac{\alfa^2 -\gamma^2}{\alfa \sqrt{\alfa \gamma^2 (\beta+\frac{1}{\beta}) + \alfa^2 + \gamma^4}} < \frac{{\rm max} \{\alfa^2 -\gamma^2,0\}}{\alfa \sqrt{2\alfa \gamma^2 + \alfa^2 + \gamma^4}} = \frac{{\rm max} \{\alfa^2 -\gamma^2,0\}}{\alfa(\alfa + \gamma^2)} <1.$$ By a similar argument, the minimum value, attained at $x= 1/(\alfa \gamma)$, is greater than $-1$. This completes the proof.
\end{proof}

\begin{remark}\label{rem:gamma1}
We will prove in Section \ref{sec:5} that when $\gamma=1$ we have $\Theta=0$, and when $\gamma>1$, it holds $\Theta<0$.
\end{remark}

\begin{corollary}\label{cor:sim}
Assume that $\Theta/\pi = m /n \in \Q\cap (-1,1)$, with $\Theta\neq 0$. Then, $\psi(u,v+ 2n \sigma)=\psi(u,v)$. In particular, by considering the quotient of $\R^2$ by the relation $(u,v)\sim (u,v+2n\sigma)$, we have:
\begin{enumerate}
\item
$\Sigma$ has the topology of an annulus.
 \item
$\Sigma$ is symmetric with respect to the $x_3=0$ plane, and with respect to $n$ equiangular vertical planes $\Omega_1,\dots, \Omega_n$ intersecting along a common vertical line. This line is actually the vertical line $L$ of Lemma \ref{lem:wen}.
\item
The planar geodesic $\psi(0,v): [0,2n\sigma]\equiv \S^1\flecha \{x_3=0\}$ has rotation index $m$.
\end{enumerate}
\end{corollary}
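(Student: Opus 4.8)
The plan is to exhibit the full surface as generated from a fundamental piece by iterating the reflections $\Psi_k$ across the symmetry planes $\Omega_k$ of Proposition \ref{pro:sim}, and to show that two consecutive reflections compose to an honest rotation whose $n$-th power is the identity. First I would record that the hypothesis $\Theta\neq 0$ excludes the rotational case $\gamma=1$ (by Remark \ref{rem:gamma1}), so $\gamma>1$ and hence $y\not\equiv 0$. Consequently the spherical curvature lines $v\mapsto \psi(u,v)$ genuinely lie on spheres $\mathbf{S}_u$ whose centers $\hat{c}(u)$ fill out the vertical line $L$ of Lemma \ref{lem:wen}; indeed $\hat{c}'(u)=(0,0,y'(0)/y(u)^2)$ with $y'(0)=(A+B)C/2>0$, so $\hat c$ is non-constant and sweeps out a segment of $L$.

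Next I would prove that $L\subset \Omega_k$ for every $k$. Since $\Psi_k$ is a symmetry of $\Sigma$ that, by \eqref{simka}, fixes the curvature line $u\mapsto\psi(u,k\sigma)$ and maps each spherical curvature line $v\mapsto\psi(u,v)$ to itself, it maps each sphere $\mathbf{S}_u$ to itself and therefore fixes its center $\hat c(u)$. As these centers span $L$, the reflection $\Psi_k$ fixes $L$ pointwise, so $L$ is contained in its mirror plane $\Omega_k$. In particular $\Omega_0$ and $\Omega_1$ are two planes through the common line $L$, distinct because $\Theta\neq 0$, and the composition $R:=\Psi_1\circ\Psi_0$ is a rotation about $L$ through the angle $2\Theta$.

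The key computation is then to convert \eqref{simka} into a shift in the $v$-parameter. Writing \eqref{simka} for $k=0$ and $k=1$ and composing gives $\psi(u,2\sigma+t)=\Psi_1\circ\Psi_0\,\psi(u,t)=R\,\psi(u,t)$ for all $t\in\R$; substituting $t\mapsto t+2\sigma$ repeatedly yields $\psi(u,2j\sigma+t)=R^{\,j}\psi(u,t)$ for every $j\in\Z$. Since $R$ rotates by $2\Theta$ and $\Theta=(m/n)\pi$, the rotation $R^{\,n}$ has angle $2n\Theta=2m\pi$ and is therefore the identity. Hence $\psi(u,v+2n\sigma)=R^{\,n}\psi(u,v)=\psi(u,v)$, which is the asserted periodicity.

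Finally I would read off the three conclusions. The periodicity shows that $\psi$ descends to an immersion of the cylinder $\R\times(\R/2n\sigma\Z)$, giving the annulus topology in (1). For (2), the symmetry across $\{x_3=0\}$ is Remark \ref{re:radan}, while $\Omega_1,\dots,\Omega_n$ are symmetry planes all containing $L$ whose consecutive members meet at the constant angle $\Theta$ by the rotation structure just established, so they are equiangular and intersect along $L$. For (3), Proposition \ref{pro:sim} says the unit tangent of the planar geodesic $\Gamma(v)=\psi(0,v)$ turns by exactly $\Theta$ on each interval $[k\sigma,(k+1)\sigma]$; over the full period $[0,2n\sigma]$, which consists of $2n$ such intervals, the total turning is $2n\Theta=2m\pi$, so the closed curve $\Gamma$ has rotation index $m$. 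I expect the main obstacle to be the step $L\subset \Omega_k$: one must rule out that two consecutive reflections compose to a screw motion or a translation rather than a pure rotation about a fixed axis, and this is precisely where the spherical-curvature-line structure, forcing all sphere centers onto the single vertical line $L$, is indispensable; without it the planes $\Omega_k$ could fail to share an axis and the surface would not close up.
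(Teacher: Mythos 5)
Your proposal is correct, and its skeleton is the paper's: reflections $\Psi_k$ across the planes $\Omega_k$ of Proposition \ref{pro:sim}, consecutive reflections composing to a rotation by $2\Theta$ about a common vertical axis $L$, rationality of $\Theta/\pi$ forcing the $n$-th power of that rotation to be the identity and hence the $2n\sigma$-periodicity, and then items (1)--(3) read off from Proposition \ref{pro:sim} and Remark \ref{re:radan} exactly as the paper does. Where you genuinely diverge is the step you yourself flag as the main obstacle, namely $L\subset \Omega_k$. The paper proves this computationally: since $\Theta\neq 0$ gives $\gamma>1$ and hence $y\not\equiv 0$, one picks $u_0$ with $y(u_0)\neq 0$ and evaluates \eqref{gencen} along the curve $u\mapsto \psi(u,k\sigma)\subset\Omega_k$; there $\psi_u$ and $N$ are both parallel to $\Omega_k$ (the curve lies in $\Omega_k$ and $\Sigma$ meets $\Omega_k$ orthogonally), so $\hat c(u_0)\in\Omega_k$, and verticality of $L$ and $\Omega_k$ finishes. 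You argue synthetically instead: $\Psi_k$ maps each $v$-curvature line to itself, hence each sphere $\mathbf{S}_u$ to itself, hence fixes each center $\hat c(u)$, hence fixes $L$ pointwise. This works, but carries a small unaddressed caveat: ``preserves the curve $\Rightarrow$ preserves the sphere'' requires the sphere containing the curve to be unique, which fails precisely when the curve is contained in a circle (two distinct spheres meet in a circle). The gap is patchable: if the $v$-curve at parameter $u$ were a circle, a reflection preserving it has mirror plane either equal to the circle's plane --- impossible here, since differentiating \eqref{simka} in $v$ shows $\psi_v(u,k\sigma)$ is orthogonal to $\Omega_k$, so the curve crosses $\Omega_k$ transversally --- or containing the circle's axis, on which $\hat c(u)$ lies, so $\hat c(u)$ is fixed anyway; alternatively one checks that \eqref{gencen} is equivariant under $\Psi_k$. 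In short, the paper's route via \eqref{gencen} gets the common axis for free from the explicit center formula, while yours gives a cleaner conceptual picture of why the mirrors must share an axis, at the cost of this degenerate-case check.
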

\begin{proof}
By Proposition \ref{pro:sim}, $\Sigma$ is symmetric with respect to, at least, $n$ \emph{different} vertical equiangular planes $\Omega_1,\dots, \Omega_n$. Note that $n\geq 2$, since $\Theta/\pi \in (-1,1)$. By Proposition \ref{pro:sim}, for any $u\in \R$ and any $k\in \{1,\dots, n\}$, the curve $\psi(u,k\sigma)$ lies in the symmetry plane $\Omega_k$. Since $\Theta \neq 0$, it follows from Remark \ref{rem:gamma1} that $\gamma > 1$, which implies by \eqref{inicondi} that $y(u)$ is not identically zero. As a consequence, there must exist $u_0$ for which \eqref{gencen} holds. From this equation we deduce that $\hat{c}(u_0)$ lies in $\Omega_k$. The planes $\Omega_k$ are vertical, so the vertical line $L$ that contains the centers $\hat{c}(u)$ must also lie in $\Omega_k$. Therefore, $L \subset \cap_{k=1}^n \Omega_k$. Since $n\geq 2$ and the planes are different, we deduce that $L= \cap_{k=1}^n \Omega_k$, as claimed.



 
In particular, we obtain directly from \eqref{simka} that $\psi(u,v+2\sigma)=\cR (\psi(u,v))$ where $\cR$ is the rotation of angle $2\Theta$ around the intersection line $L$. Now, since $2\Theta = 2\pi (m/n)$, we have from this equation that $\psi(u,v)$ is $2n\sigma$-periodic in the $v$-direction, as stated. In particular, by considering the quotient of $\R^2$ by the relation $(u,v)\sim (u,v+2n\sigma)$, we deduce that $\Sigma$ is, topologically, an immersed annulus in $\R^3$. The fact that $\Sigma$ is symmetric with respect to $x_3=0$ was proved in Section \ref{sec:2}, see Remark \ref{re:radan}. Assertion (3) follows directly from Proposition \ref{pro:sim}. This completes the proof of Corollary \ref{cor:sim}.
\end{proof}

\section{The free boundary condition}\label{sec:4}
In this section we will show that, when $(\alfa,\beta,\gamma)$ varies in a certain region $\cW$ of the parameter domain $\cO\subset \R^3$, we can control the orthogonal intersection of the spherical curvature lines of $\Sigma$ with the spheres that contain them, by studying system \eqref{system1}.


So, to start, let us consider $(\alfa,\beta,\gamma)\in \cO$, and let $(y(u),z(u)):\R\flecha \R^2$ be the solution to system \eqref{system1} with the initial conditions \eqref{inicondi}. Thus, $y(0)=z(0)=0$. We first prove:
%

\begin{proposition}\label{prosistem1}
If $\gamma>1$, then there exists a unique $u_1>0$ such that:
\begin{enumerate}
\item
$y(u_1)=0$.
\item
$y(u)>0$ for every $u\in (0,u_1)$.
 \item
If $z(u)$ is not identically zero, then $z(u)\neq 0$ for every $u\in (0,u_1]$.
\end{enumerate}
\end{proposition}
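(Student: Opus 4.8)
The plan is to treat the degenerate case $z\equiv 0$ separately, to deduce assertion (3) from (1)--(2) by a Sturm comparison, and to devote the main effort to showing that $y$ has a \emph{first} positive zero. As a preliminary reformulation, write $w:=y^2-z^2$; then \eqref{system1} becomes the pair of scalar linear equations $y''+Q_y\,y=0$ and $z''+Q_z\,z=0$ with $Q_y=(1-\hat a)+2w$ and $Q_z=-\hat a+2w$, so that $Q_y-Q_z\equiv 1$ and $Q_y>Q_z$ everywhere. A direct computation also yields the first integral $E:=(y')^2-(z')^2+(1-\hat a)y^2+\hat a z^2+w^2$, constant along the solution and equal to $y'(0)^2-z'(0)^2$ by \eqref{inicondi}. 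Since $\gamma>1$, \eqref{const2} gives $C>0$ while $A,B\ge 1$, so $y'(0)=\tfrac12(A+B)C>0$; hence $y>0$ on a maximal interval $(0,u_1)$, where $u_1\in(0,\infty]$ is by definition the first positive zero of $y$. Once $u_1<\infty$ is established, assertion (2) and the uniqueness of $u_1$ are automatic.

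If $A=B$ then $z'(0)=0$, and since $z\equiv 0$ solves the second equation with these data, uniqueness forces $z\equiv 0$. In that case $y$ obeys the one--dimensional conservative equation $y''=[(\hat a-1)-2y^2]\,y$, whose energy $\tfrac12 y'^2+\tfrac12 y^4-\tfrac{\hat a-1}{2}y^2\equiv\tfrac12 y'(0)^2$ confines $y^2$ to $[0,t_*]$, where $t_*>0$ is the unique positive root of $t\mapsto y'(0)^2+(\hat a-1)t-t^2$; reversibility then forces $y$ to rise to $\sqrt{t_*}$ and return to $0$ at a finite $u_1$, with $y>0$ on $(0,u_1)$ and (3) vacuous. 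In the general case, (3) follows from (1)--(2) by Sturm comparison: because $Q_y>Q_z$, between any two consecutive zeros of $z$ the function $y$ must vanish; as $y$ and $z$ both vanish at $u=0$, the first positive zero of $z$ (if any) is strictly larger than $u_1$, whence $z\neq 0$ on $(0,u_1]$.

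For the existence of $u_1$ when $z\not\equiv 0$, I argue by contradiction: suppose $y>0$ on all of $(0,\infty)$, and (by the $z\mapsto -z$ symmetry of \eqref{system1}) that $z'(0)>0$. The same Sturm comparison shows $z$ can have no positive zero, for otherwise $y$ would vanish before it; hence $z>0$ on $(0,\infty)$. The Wronskian $M:=y'z-yz'$ satisfies $M'=-yz<0$ and $M(0)=0$, so $M<0$, $(y/z)'=M/z^2<0$, and $y/z$ decreases from its limit $k_0=y'(0)/z'(0)$; thus $y\le k_0 z$. The contradiction is meant to come from the opposite signs of the cubic terms in \eqref{system1}: the $y$--equation carries the \emph{restoring} term $-2y^3$, whereas the $z$--equation carries the \emph{anti--restoring} term $+2z^3$. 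Since the solution is global on $\R$, $z$ cannot blow up in finite time; combined with $y\le k_0 z$ this forces $w\ge 0$ to persist (a persistently negative $w$ makes $z''\gtrsim z^3$ and triggers finite--time blow--up). With $w\ge 0$ one has $y''\le(\hat a-1)y$, so in the range $\hat a<1$ (which by \eqref{const2} includes at least $1<\gamma<1+\sqrt2$, since there $C^2<1\le AB$) a comparison with $\eta''+(1-\hat a)\eta=0$ forces $y$ to vanish within distance $\pi/\sqrt{1-\hat a}$ — contradiction; the complementary range $\hat a\ge 1$ is then closed using the first integral $E$ to bound the orbit.

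The genuinely delicate point is precisely this last step: ruling out that $y$ remains positive for all $u>0$. The difficulty is that $Q_y=(1-\hat a)+2w$ is a priori sign--indefinite, so $y$ is not the oscillatory solution of a fixed--sign potential and no single comparison closes the argument uniformly in $(\alfa,\beta,\gamma)$. One must instead combine global existence with the asymmetry between the restoring and anti--restoring cubic terms to pin down the sign of $w$, and only then invoke Sturm or energy estimates. By contrast, the case $z\equiv 0$, the reduction of (3), and the uniqueness of $u_1$ are routine; the care lies entirely in the global dynamics of the coupled system.
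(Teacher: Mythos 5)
Your reduction of item (3) to items (1)--(2) via Sturm comparison is correct and is actually slicker than the paper's argument (since $Q_y-Q_z\equiv 1>0$, the first positive zero of $z$ must lie strictly beyond the first positive zero of $y$), and your treatment of the degenerate case $\alfa=\beta$ (where $z\equiv 0$ and $y$ obeys a Duffing-type conservative equation) is fine. But the core of the proposition --- that $y$ actually has a finite first positive zero $u_1$ for \emph{every} $(\alfa,\beta,\gamma)$ with $\gamma>1$ --- is not established, and this is precisely the part you yourself flag as delicate. Two concrete gaps: (i) your blow-up claim, ``persistently negative $w$ makes $z''\gtrsim z^3$ and triggers finite-time blow-up,'' is asserted rather than proved, and it is genuinely problematic when $\hat{a}<0$ (which occurs, e.g., for $\alfa,\beta$ large and $\gamma$ close to $1$): the inequality $z''\geq \hat{a}z+cz^3$ with $\hat{a}<0$ does not force blow-up, because $z'$ can change sign and $z$ can turn around or decay toward $0$; ruling this out requires the conserved quantities and a real argument, not a one-line comparison. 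Note also that when $4ABC^2<(A-B)^2$ one has $k_0<1$, so under your contradiction hypothesis $w<0$ from the start and your entire argument rests on this unproven branch. (ii) The range $\hat{a}\geq 1$ (i.e.\ $C^2\geq AB$, which occurs for all large $\gamma$) is dismissed with the placeholder ``closed using the first integral $E$ to bound the orbit''; there, $y''\leq(\hat{a}-1)y$ gives no oscillation, and no mechanism forcing $y$ to vanish is exhibited. A further soft point: you invoke global existence on $\R$ as an a priori fact to derive the contradiction, but for this cubic system global existence is not automatic; in the paper it is itself a consequence of the integrable structure, the same structure that yields $u_1$.

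For comparison, the paper's proof avoids all qualitative dynamics: it uses the two first integrals \eqref{def:h}, \eqref{def:k} and the Jacobi change of variables \eqref{change} to decouple \eqref{system1} into the first-order system \eqref{system2}, where $q$ factors as in \eqref{pola3} with a simple root $r_3=C^2+1>1$. Finiteness of $u_1$ is then automatic, since $s(\landa)$ is periodic with finite half-period $\cL$ in \eqref{stperiod1} and $y=0$ exactly when $s=1$; item (3) becomes the period inequality $\cL<\cM$ between the elliptic integrals \eqref{stperiod1} and \eqref{stperiod2}, proved by a symmetrizing change of variable. If you want to keep your qualitative route, the missing ingredient is a proof that under the hypothesis ``$y>0$ on $(0,\infty)$'' the orbit is confined in a way incompatible with the conserved energies for all $\hat{a}$; as it stands, the proposal proves the proposition only in the restricted regime $w\geq 0$ with $\hat{a}<1$.
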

\begin{proof}
Wente noted in \cite[p.17]{W} that the system \eqref{system1} (for an arbitrary constant $\hat{a}$) has a Hamiltonian nature, and admits two first integrals, namely,
\begin{equation}\label{def:h}
y'^2-z'^2 - (\hat{a}-1)y^2 + \hat{a} z^2 + (y^2-z^2)^2 = h \in \R
\end{equation}
and
\begin{equation}\label{def:k}
(z y'-y z')^2 + z'^2 + z^2 (y^2-z^2 -\hat{a}) = k\in \R.
\end{equation}

These first integrals allow us to separate variables in system \eqref{system1}, following a classical procedure by Jacobi. Specifically, as detailed in \cite[p.17]{W}, one starts by considering the change of variables 
\begin{equation}\label{change}
y^2 = -(1-s) (1-t), \hspace{1cm} z^2 = -s t.
\end{equation}
This defines a diffeomorphism from any of the four open quadrants of the $(y,z)$-plane onto the region of the $(s,t)$-plane given by $\{s> 1, t< 0\}$, which extends homeomorphically to the boundary. After this change, any solution to system \eqref{system1} transforms into a solution $(s(\landa),t(\landa))$ to system
\begin{equation}\label{system2}
\left\{\def\arraystretch{1.3} \begin{array}{lll} s'(\landa)^2 & = & s (s-1) q(s), \hspace{0.5cm} (s\geq 1), \\ t'(\landa)^2 & = & t (t-1) q(t), \hspace{0.6cm} (t\leq 0),\end{array} \right.
\end{equation}
where $q(x)$ is the third-degree polynomial
\begin{equation}\label{def:q}
q(x)=-x^3 + (\hat{a}+1) x^2 + (h-\hat{a})x + k,
\end{equation}
and the new parameter $\landa$ is related to $u$ by 
\begin{equation}\label{changecor}
2 u'(\landa)= s(\landa)-t(\landa)>1.
\end{equation}

Let now $(y(u),z(u))$ be the solution to \eqref{system1} that we started with, i.e., the one with the initial conditions \eqref{inicondi}, and the choice $\hat a:= 1-AB+C^2$. Then, from \eqref{inicondi}, one can write $h,k$ in \eqref{def:h}, \eqref{def:k} in terms of $(A,B,C)$, and deduce from there that $q(x)$ in \eqref{def:q} can be rewritten as
\begin{equation}\label{pola3}
q(x)=-\left(x-(C^2+1)\right)\left(x^2-(1-AB)x+\frac{(A-B)^2}{4}\right) =: -(x-r_3)h(x).
\end{equation}
Here, we recall that $A,B,C$ are given  in terms of $(\alfa,\beta,\gamma)$ by \eqref{const2}.

Note that $q(x)$ always has a simple positive root $r_3:=C^2+1> 1$, and that $q(0)=0$ if and only if $\alfa=\beta$. If $\alfa\neq \beta$, then by \eqref{pola3}, $q(x)$ has two negative, possibly coinciding roots $r_1\leq r_2<0$, besides the positive one at $r_3$. 

Let $(s(\landa),t(\landa))$ be the solution to \eqref{system2} obtained from $(y(u),z(u))$. Since $y(0)=z(0)=0$, we have from \eqref{change} that $s(0)=1$, $t(0)=0$, possibly after a translation in the $\landa$-parameter. Let us observe that each equation of \eqref{system2} is a first order autonomous ODE, that can be implicitly solved by integration. From here, we see that both $s(\landa)$ and $t(\landa)$ are defined for every $\landa \in \R$. Moreover, $s$ is periodic with $s(\landa+ 2\cL)=s(\landa)$, where
\begin{equation}\label{stperiod1}
\cL:=\int_1^{r_3} \frac{dx}{\sqrt{x (x-1) q(x)}}<\8.
\end{equation}
Similarly, if $r_2$ is a simple root of $q(x)$, then $t$ is periodic with $t(\landa + 2\cM)=t(\landa),$ where the corresponding half-period is
\begin{equation}\label{stperiod2}
\cM:=\int_{r_2}^{0} \frac{dx}{\sqrt{x (x-1) q(x)}}<\8.
\end{equation}
If $r_2$ is a double root of $q(x)$, then $\cM=\8$ and $t$ is not periodic.

In this way, $$s(2\cL)=s(0)=1, \hspace{0.5cm} s(\cL)=r_3,$$ and either $s(\landa)\equiv 1$, or $s(\landa)\in (1,r_3]$ for every $\landa\in (0,2\cL)$. Note that $s(\landa)\equiv 1$ is actually not possible, by the first equation in \eqref{change} and the condition $y'(0)>0$.

In a similar fashion, if $\cM<\8$, then $$t(2\cM)=t(0)=0, \hspace{0.5cm} t(\cM)=r_2,$$ and either $t(\landa)\equiv 0$ or $t(\landa)\in [r_2,0)$ for every $\landa\in (0,2\cM)$. If $\cM=\8$, then $t(\landa)\to r_2$ as $\landa\to \8$. Moreover, in both cases, $t(\landa)\equiv 0$ happens if and only if $z'(0)=0$ in \eqref{inicondi}, i.e., if and only if $\alfa=\beta$.

The parameters $u$ and $\landa$ are related by \eqref{changecor}. From there, $u'(\landa)$ is bounded between two positive constants, and so there exists a unique value $u_1:=u(2\cL)>0$ such that the restriction of $u(\landa)$ to $[0,2\cL]$ is a diffeomorphism onto $[0,u_1]$. By the first equation in \eqref{change} and the above properties of $s(\landa)$, we deduce that items (1) and (2) of the Proposition hold.

In order to prove item (3), in view of the second equation in \eqref{change}, it suffices to show that $\cL<\cM$ when $t(\landa)$ is not identically zero. We do this next.

To start, consider the function $g(x)=1/(x-w_0)$, where $w_0\in (0,1)$ is a number that will be determined later on. We can then use the change of variable $\mu=g(x)$ in the intervals $[1,r_3]$ and $[r_2,0]$, together with the general relation for $g$ $$x-a= - \frac{g(x) -g(a)}{g(x)g(a)} = -\frac{ \mu-g(a)}{\mu g(a)},$$ to obtain from \eqref{stperiod1}, \eqref{stperiod2} and \eqref{pola3} the alternative expressions for $\cL$ and $\cM$
\begin{equation}\label{altLM}
\cL = \int_{g(r_3)}^{g(1)} \Psi(\mu) d\mu, \hspace{0.5cm} \cM = \int_{g(0)}^{g(r_2)} \Psi(\mu) d\mu,
\end{equation}
where $$\Psi(\mu)= \frac{\sqrt{g(r_2) g(0)g(1)g(r_3)} }{\sqrt{-(\mu-g(r_2))(\mu-g(0))(\mu-g(1))(\mu-g(r_3))}\sqrt{\frac{1}{\mu} +w_0-r_1}}.$$ Since $\mu<0$ for every $\mu\in (g(0),g(r_2))$, we have from \eqref{altLM} that $\cM>T$, where $$T:=\int_{g(0)}^{g(r_2)}  \frac{\sqrt{g(r_2) g(0)g(1)g(r_3)} \, d\mu }{\sqrt{-(\mu-g(r_2))(\mu-g(0))(\mu-g(1))(\mu-g(r_3))}\sqrt{w_0-r_1}}.$$ Similarly, since $\mu>0$ when $\mu\in (g(r_3),g(1))$, we obtain $\cL<T'$, where
\begin{equation}\label{tprima}
T':=\int_{g(r_3)}^{g(1)}  \frac{\sqrt{g(r_2) g(0)g(1)g(r_3)} \, d\mu }{\sqrt{-(\mu-g(r_2))(\mu-g(0))(\mu-g(1))(\mu-g(r_3))}\sqrt{w_0-r_1}}.
\end{equation}

We next show that $T=T'$ for a certain choice of $w_0\in (0,1)$. Consider the function $f(x,w):=1/(x-w)$, where $w\in (0,1)$, and define $$F(w):= f(1,w)-f(r_3,w)-(f(r_2,w)-f(0,w)): (0,1)\flecha \R.$$ Since $F(w)\to -\8$ (resp. $F(w)\to \8$) as $w\to 0$ (resp. $w\to 1$), there exists some $w_0\in (0,1)$ for which $F(w_0)=0$, by continuity of $F$. If we define $w_0$ in this way, we obtain that $$g(1)-g(r_3)= g(r_2)-g(0)>0.$$ From this expression, we obtain by applying the change of variable $\xi=-\mu+g(1)+g(0)$ in \eqref{tprima} that

$$T' =  - \int_{g(r_2)}^{g(0)}  \frac{\sqrt{g(r_2) g(0)g(1)g(r_3)} \, d\xi }{\sqrt{-(\xi-g(r_2))(\xi-g(0))(\xi-g(1))(\xi-g(r_3))}\sqrt{w_0-r_1}}  =  T.$$ Thus, $\cL< T' =T<\cM$, what yields our claim and completes the proof of Proposition \ref{prosistem1}.
\end{proof}

\begin{remark}\label{rem:u1an}
When $u_1$ is viewed as a function $u_1=u_1(\alfa,\beta,\gamma): \cO\cap \{\gamma>1\}\flecha \R$, it is real analytic. This follows from the standard analyticity theorem of solutions to ODEs with respect to initial conditions and parameters and \eqref{stperiod1}, \eqref{changecor}, since $u_1=u(2\cL)$.
\end{remark}

The next result will allow us to control the free boundary condition. Let us recall that the spherical curvature line $\psi(u_0,v)$ intersects orthogonally the sphere (or plane) ${\bf S}_{u_0}$ where it lies if and only if $y(u_0)=z(u_0)$; see Remark \ref{re:radan}.

Given $(\alfa,\beta,\gamma)\in \cO$ and $A,B,C$ as in \eqref{const2}, we denote 
\begin{equation}\label{defi:W}
\cW :=\left\{(\alfa,\beta,\gamma)\in \cO : \beta\geq \alfa,  \, C^2>\frac{(A-B)^2}{4AB}\right\}.
\end{equation}
We remark that if $(\alfa,\beta,\gamma)\in \cW$, then $\alfa\geq 1$ and $\gamma>1$.

\begin{proposition}\label{prosistem2}
Let $(\alfa,\beta,\gamma)\in \cW$. 
%
Then, there is a unique $\tau\in (0,u_1]$ such that $y(\tau)=z(\tau)$, and $y(u)>z(u)$ for every $u\in (0,\tau)$. 

Moreover, the map $(\alfa,\beta,\gamma)\in \cW \mapsto \tau(\alfa,\beta,\gamma)$ is real analytic.
%
\end{proposition}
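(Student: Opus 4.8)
The plan is to track the sign of $y-z$ along the solution $(y(u),z(u))$ of \eqref{system1}, using the separated-variables picture from the proof of Proposition \ref{prosistem1}. Recall that orthogonal intersection at $u_0$ means $y(u_0)=z(u_0)$, so I want to locate the first positive zero $\tau$ of $y-z$. The key observation is that the condition $C^2 > (A-B)^2/(4AB)$ defining $\cW$ should translate, via \eqref{pola3}, into a statement about the roots of $q(x)$: writing $q(x)=-(x-r_3)h(x)$ with $h(x)=x^2-(1-AB)x+(A-B)^2/4$, the two negative roots $r_1\le r_2<0$ satisfy $r_1 r_2 = (A-B)^2/4$ and $r_1+r_2 = 1-AB$. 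I would first compute $h(1)=1-(1-AB)+(A-B)^2/4 = AB+(A-B)^2/4=(A+B)^2/4>0$ and check the sign of $h$ at the value of $x$ corresponding to the diagonal $y=z$; by \eqref{change}, $y=z$ forces $-(1-s)(1-t)=-st$, i.e. $s+t=1$, which is a curve in the $(s,t)$-plane that I would intersect with the solution trajectory.

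First I would set up the geometry in the $(y,z)$-plane. By \eqref{inicondi}, the initial velocity is $(y'(0),z'(0)) = \tfrac12((A+B)C,\,(B-A)\sqrt{C^2+1})$; since $\be\ge \alfa$ gives $B\ge A$ and $C>0$ (as $\gamma>1$), both components are nonnegative, so the trajectory enters the region $y>0$, $z\ge 0$ and, crucially, starts with $y'(0)\ge z'(0)$ — I would verify $y'(0)>z'(0)$ using precisely the defining inequality $C^2>(A-B)^2/(4AB)$, which rearranges to $(A+B)^2C^2 > (A-B)^2(C^2+1)$, i.e. $(y'(0))^2>(z'(0))^2$. Thus $(y-z)'(0)>0$ and $y-z$ starts positive on $(0,\ve)$. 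Since Proposition \ref{prosistem1} already gives a first zero $u_1$ of $y$ with $y>0$ on $(0,u_1)$ and $z\ne 0$ throughout $(0,u_1]$, at $u=u_1$ we have $y(u_1)=0$ while $z(u_1)\ne 0$; I must argue $z(u_1)>0$ so that $(y-z)(u_1)<0$, which by the intermediate value theorem produces a zero $\tau\in(0,u_1)$ of $y-z$. The sign of $z$ on $(0,u_1]$ follows from the quadrant analysis: the trajectory cannot cross $z=0$ on this interval because, by \eqref{change}, $z=0$ corresponds to $t=0$, i.e. $\landa\in 2\cM\Z$, and $\cL<\cM$ (established in Proposition \ref{prosistem1}) ensures $t$ stays strictly negative — hence $z$ stays strictly positive — on the $\landa$-interval $(0,2\cL)$ corresponding to $(0,u_1)$.

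The heart of the argument is \emph{uniqueness} of $\tau$ together with $y>z$ on $(0,\tau)$. For this I would exhibit monotonicity of $y-z$, or more robustly, show the diagonal $\{y=z,\ y>0\}$ is crossed transversally and only once. Using the first integral structure, I would compute $(y-z)''$ from \eqref{system1}: subtracting the two equations gives $(y-z)'' = (\hat a -1)y - \hat a z - 2(y-z)(y^2-z^2) = -AB\,\tfrac{y+z}{1}\cdots$ — more precisely $(\hat a-1)y-\hat a z = (\hat a-1)(y-z) - z$, and combined with the cubic term one sees that along the diagonal $y=z$ the quantity $(y-z)''$ has a definite sign, forcing any zero of $y-z$ (with $y>0$) to be a transversal down-crossing. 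Since a $C^2$ function whose every interior zero is a strict down-crossing can have at most one zero after an initial up-crossing, uniqueness follows. I expect \emph{this transversality computation to be the main obstacle}: one must show that at a diagonal point $y=z=:w>0$ the sign of $(y-z)''$ is controlled by the $\cW$-inequality, ruling out the trajectory re-touching the diagonal from below; the cleanest route is likely to evaluate the Hamiltonian \eqref{def:k} along $y=z$ and use it to pin the sign, rather than a brute-force second-derivative estimate.

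Finally, real analyticity of $(\alfa,\beta,\gamma)\mapsto \tau$ on $\cW$ follows from the implicit function theorem applied to the real analytic map $(\alfa,\beta,\gamma,u)\mapsto (y-z)(u)$: by the standard analytic dependence of ODE solutions on parameters and initial data (as in Remark \ref{rem:u1an}), this map is real analytic, it vanishes at $(\alfa,\beta,\gamma,\tau)$, and its $u$-derivative there is $(y-z)'(\tau)<0$ by the transversality just established, so $\tau$ depends analytically on the parameters.
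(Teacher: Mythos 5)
Your strategy is essentially the paper's own: set $\phi=y-z$, observe that $\phi(0)=0$ and that the defining inequality $C^2>(A-B)^2/(4AB)$ of $\cW$ is exactly $\phi'(0)>0$, produce a first zero by the intermediate value theorem, rule out tangential zeros via the computation $\phi''=-y$ at diagonal points, and get analyticity of $\tau$ from the implicit function theorem. However, there is a genuine gap: your argument silently assumes $z\not\equiv 0$, i.e.\ $\alpha<\beta$. The set $\cW$ in \eqref{defi:W} contains the points with $\alpha=\beta$ (there the defining inequality reduces to $C^2>0$, i.e.\ $\gamma>1$), and these points matter later: the arc of Theorem \ref{thm:Gamma} starts at $\Upsilon(0)=(\alpha_0,\alpha_0,\gamma_0)\in\cW$, precisely where $\tau=u_1$. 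When $\alpha=\beta$ one has $z\equiv 0$ (this is the exceptional case in item (3) of Proposition \ref{prosistem1}), so $\phi(u_1)=y(u_1)-z(u_1)=0$ rather than $<0$, and your IVT step yields no zero in the open interval; the zero is $\tau=u_1$ itself (easy to patch). More seriously, at $\tau=u_1$ your transversality computation is vacuous, since $\phi''(\tau)=-y(\tau)=0$ there; thus the hypothesis $\phi'(\tau)\neq 0$ needed for your implicit function theorem step is unproven exactly at these points, and with it the claimed analyticity of $\tau$ on all of $\cW$. The paper closes this case by a separate argument: if $\phi'(u_1)=0$ then $y'(u_1)=z'(u_1)=0$ (recall $z\equiv0$), and evaluating the first integral \eqref{def:h} at $u=0$ and $u=u_1$, where $y$ vanishes at both ends, gives $y'(u_1)^2=y'(0)^2>0$, a contradiction. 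You need this, or an equivalent argument, to cover $\alpha=\beta$.

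A secondary point: your uniqueness argument claims more than it proves. The identity $\phi''=-y$ at points where $y=z$ only excludes zeros of $\phi$ at which $\phi'=0$ \emph{and} $\phi>0$ on one side; it does not exclude transversal up-crossings of the diagonal after $\tau$, nor tangential touches approached from below (a strict local maximum of $\phi$ at height $0$ is compatible with $\phi<0$ on both sides). So ``every interior zero is a strict down-crossing'' is not established, and the at-most-one-zero conclusion does not follow. Fortunately the proposition does not require it: the two conditions $y(\tau)=z(\tau)$ and $y>z$ on $(0,\tau)$ characterize $\tau$ as the \emph{first} zero of $\phi$, so uniqueness is automatic once existence and the sign of $\phi$ near $0$ are in place, and for the implicit function theorem you only need transversality at that first zero --- which your local-maximum argument does give when $\tau<u_1$, i.e.\ when $y(\tau)>0$.
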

\begin{proof}
Define 
\begin{equation}\label{deffi}
\phi(u):=y(u)-z(u).
\end{equation}
By \eqref{inicondi} and
\begin{equation}\label{fbcon}
C^2>\frac{(A-B)^2}{4AB},
\end{equation}
we have $\phi(0)=0$ and $\phi'(0)>0$. On the other hand, by Proposition \ref{prosistem1} we have $\phi(u_1)\leq 0$, with equality holding only if $z(u)\equiv 0$. Thus, there exists a unique value $\tau\in (0,u_1]$ that satisfies the stated conditions.

We now prove that the map $(\alfa,\beta,\gamma)\in \cW \mapsto \tau(\alfa,\beta,\gamma)$ is real analytic. Consider the real analytic mapping $$\Phi(u,\alfa,\beta,\gamma) = y(u)-z(u):\R\times \cW\flecha \R,$$ where we view $y(u),z(u)$ as functions depending also on $(\alfa,\beta,\gamma)$. We claim that $\Phi_u\neq 0$ at any point $w\in \R\times \cW$ of the form 
\begin{equation}\label{wtau}
w:=(\tau(\alfa,\beta,\gamma),\alfa,\beta,\gamma).
\end{equation} 
This would imply by the implicit function theorem that $\tau(\alfa,\beta,\gamma)$ is real analytic on $\cW$.


To prove this claim, arguing by contradiction, assume that $\Phi_u(w)=0$ for $w$ as in \eqref{wtau}. Then, we must have $y(\tau)=z(\tau)$ and $y'(\tau)=z'(\tau)$. Thus, it follows from \eqref{system1} that $y''(\tau)-z''(\tau)=-y(\tau)\leq 0$. We distinguish two cases.

If $\tau\neq u_1$, then $y(\tau)\neq 0$ by item (2) of Proposition \ref{prosistem1}, and so $\phi''(\tau)<0$. Thus, $y(u)-z(u)$ has a maximum at $u=\tau$, in contradiction with the fact that $y(u)-z(u)>0$ for every $u\in (0,\tau)$. 

If $\tau= u_1$, then $z(u)\equiv 0$, by item (3) of Proposition \ref{prosistem1}. So, since $y(\tau)=0$ in this case, we deduce using the first integral \eqref{def:h} that $y'(\tau)^2 = y'(0)^2$. This is a contradiction, since $y'(\tau)=z'(\tau)=0$ but $y'(0)>0$.

This proves the desired analyticity. Note that the above argument shows that $\Phi_u(w)<0$.
\end{proof}

\begin{remark}\label{rem:ymenorz}
Assume again that $\gamma>1$, that $\beta\geq \alfa\geq 1$, but now
\begin{equation}\label{fbcon2}
C^2\leq \frac{(A-B)^2}{4AB}.
\end{equation}
Then, $y(u)<z(u)$ for every $u>0$ small enough. 

Indeed, if we have the strict inequality in \eqref{fbcon2}, the same argument as in the proof of Proposition \ref{prosistem2} shows that $\phi(0)=0$ and $\phi'(0)<0$, where $\phi$ is given by \eqref{deffi}; this proves the claim. Now, if equality holds in \eqref{fbcon2}, then a computation using \eqref{system1} shows that $\phi'(0)=\phi''(0)=0$ and $\phi'''(0)=-y'(0)<0$. Again, we have $\phi(u)<0$ for $u>0$ sufficiently small, and the claim also holds in this case.
\end{remark}


\section{Proof of Theorem \ref{th:main}}\label{sec:5}

\subsection{The period map}

Given $(\alpha,\beta,\gamma) \in \cO$ such that $\alpha > 1$, we define the \emph{period map} ${\rm Per}(\alpha,\beta,\gamma)$ as
\begin{equation}\label{eq:Per}
{\rm Per}(\alpha,\beta,\gamma) = \frac{\Theta}{\pi}= \frac{1}{\pi}\int_{\varrho_0}^{\varrho_1} \frac{ x-1/x}{\sqrt{p(x)}} dx,
\end{equation}
where $\varrho_0<\varrho_1$ are the two positive roots of $p(x)$; see \eqref{def:pab}, \eqref{ecutheta2}. 

\begin{proposition}\label{perana}
The period map ${\rm Per}(\alpha,\beta,\gamma)$ can be extended analytically to the set $\{(\alpha,\beta,\gamma) \; :\; \alpha, \beta > 0, \gamma \geq 1\}$. Moreover, for this analytic extension we have \begin{equation}\label{eq:invper}
    {\rm Per}\left(\alpha,\beta,\gamma\right)={\rm Per}\left(\frac{1}{\alpha},\beta,\gamma\right) = {\rm Per}\left(\alpha,\frac{1}{\beta},\gamma\right).
\end{equation}
Additionally, if $\alpha = 1$,
\begin{equation}\label{eq:pera1}
\begin{aligned}
{\rm Per}(1,\beta,\gamma) = \frac{1 - \gamma^2}{\sqrt{1 + (\beta + \frac{1}{\beta})\gamma^2 + \gamma^4}}.
\end{aligned}
\end{equation}
\end{proposition}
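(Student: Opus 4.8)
The plan is to kill both difficulties in the integral \eqref{eq:Per} at once --- the parameter-dependent endpoints $\varrho_0,\varrho_1$ (which coalesce as $\alpha\to1$) and the inverse-square-root singularity of $\sqrt{p(x)}$ there --- by a single trigonometric substitution. Starting from the factorization $\sqrt{p(x)}=\sqrt{(\varrho_1-x)(x-\varrho_0)}\,\sqrt{(x+\beta\gamma)(x+\gamma/\beta)}$ valid on $[\varrho_0,\varrho_1]$ (as in \eqref{thetaprod}), I would set
\[
x=x(t):=\frac1\gamma\Big(A+\tfrac12(\alpha-\alpha^{-1})\sin t\Big),\qquad t\in[-\tfrac\pi2,\tfrac\pi2],
\]
with $A=\frac12(\alpha+\alpha^{-1})$ as in \eqref{const2}. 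Since $\tfrac{\varrho_0+\varrho_1}2=A/\gamma$ and $\tfrac{\varrho_1-\varrho_0}2=\tfrac12(\alpha-\alpha^{-1})/\gamma$, this maps $[-\pi/2,\pi/2]$ onto $[\varrho_0,\varrho_1]$ with the correct orientation and gives $\sqrt{(\varrho_1-x)(x-\varrho_0)}=\tfrac{\varrho_1-\varrho_0}2\cos t$, so the singular factor becomes $dx/\sqrt{(\varrho_1-x)(x-\varrho_0)}=dt$. The period integral therefore takes the form
\[
\Theta(\alpha,\beta,\gamma)=\int_{-\pi/2}^{\pi/2}\frac{x(t)-1/x(t)}{\sqrt{(x(t)+\beta\gamma)(x(t)+\gamma/\beta)}}\,dt,
\]
which in particular re-derives $\int_{\varrho_0}^{\varrho_1}dx/\sqrt{(\varrho_1-x)(x-\varrho_0)}=\pi$, the fact used in the proof of Proposition \ref{pro:sim}.

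The analytic extension in the first assertion is the crux, and this is exactly where the substitution pays off. For all $\alpha,\beta,\gamma>0$ one has $x(t)\ge\tfrac1\gamma\min(\alpha,\alpha^{-1})>0$ on the whole interval, so $x(t)+\beta\gamma$ and $x(t)+\gamma/\beta$ stay bounded away from $0$; hence the integrand $G(t,\alpha,\beta,\gamma)$ is jointly real analytic on $[-\tfrac\pi2,\tfrac\pi2]\times\{\alpha,\beta,\gamma>0\}$, with no remaining endpoint singularity. I would then complexify: for each fixed $t$, $G$ extends holomorphically in $(\alpha,\beta,\gamma)$ to a complex neighborhood of each real point, the principal square root creating no branch obstruction because its argument remains near the positive real axis. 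By Morera's theorem and Fubini, $\Theta=\int_{-\pi/2}^{\pi/2}G\,dt$ is holomorphic, hence real analytic, on $\{\alpha,\beta,\gamma>0\}\supset\{\alpha,\beta>0,\gamma\ge1\}$, and it agrees with \eqref{eq:Per} wherever $\alpha>1$. This supplies the required extension.

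The remaining two assertions then fall out of the substituted form. Under $\alpha\mapsto\alpha^{-1}$ the quantity $A$ is unchanged while $\alpha-\alpha^{-1}$ changes sign, so the new $x(t)$ equals the old $x(-t)$; the substitution $t\mapsto-t$ gives the first identity in \eqref{eq:invper}. Under $\beta\mapsto\beta^{-1}$ the denominator $(x+\beta\gamma)(x+\gamma/\beta)=x^2+\gamma(\beta+\beta^{-1})x+\gamma^2$ depends on $\beta$ only through $\beta+\beta^{-1}$, and $x(t)$ is independent of $\beta$, so the integrand, and hence $\Theta$, is invariant, giving the second identity. Finally, at $\alpha=1$ we have $\alpha-\alpha^{-1}=0$, so $x(t)\equiv1/\gamma$ and the integrand is constant in $t$; evaluating it there, using $(1/\gamma+\beta\gamma)(1/\gamma+\gamma/\beta)=\gamma^{-2}(1+(\beta+\beta^{-1})\gamma^2+\gamma^4)$ and $1/\gamma-\gamma=(1-\gamma^2)/\gamma$, and multiplying by the length $\pi$ of the interval yields \eqref{eq:pera1} after dividing by $\pi$. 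The only genuine work is the positivity and branch verification underlying the analyticity claim; everything else is the bookkeeping of the substitution.
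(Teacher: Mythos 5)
Your proposal is correct and is essentially the paper's own argument: the paper likewise reduces \eqref{eq:Per} to an integral over a fixed parameter-independent interval via the affine substitution $x=h_\alpha(t)=\left(\frac{\alpha}{\gamma}-\frac{1}{\alpha\gamma}\right)t+\frac{1}{\alpha\gamma}$, $t\in[0,1]$, reads off the analytic extension from the resulting expression (the parameters now enter only through analytic, nonvanishing factors), obtains \eqref{eq:invper} from the invariance of $p(x)$ under $\alpha\mapsto 1/\alpha$, $\beta\mapsto 1/\beta$, and evaluates at $\alpha=1$ exactly as you do, using $h_1(t)\equiv 1/\gamma$ and $\int_0^1 dt/\sqrt{t(1-t)}=\pi$. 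The only difference is that your trigonometric substitution also absorbs the fixed endpoint factor $1/\sqrt{t(1-t)}$ that the paper retains in its integrand, which makes the analyticity step marginally more self-contained but does not change the structure of the proof.
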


\begin{proof}
We observe that the polynomial \eqref{def:pab} can be defined for all $\alpha,\beta > 0$. In fact, $p(x)$ is invariant under the changes $\alpha \to 1/\alfa$ and $\beta \to 1/\beta$. As a consequence, the definition of ${\rm Per}(\alpha,\beta,\gamma)$ in \eqref{eq:Per} can be extended to every $\alpha,\beta,\gamma > 0$ such that $\alpha \neq 1$. Equation \eqref{eq:invper} follows immediately from this definition.

Applying the change of variables 
\begin{equation}\label{chava}
x = h_\alpha(t) := \left(\frac{\alpha}{\gamma} - \frac{1}{\alpha \gamma}\right)t + \frac{1}{\alpha \gamma},
\end{equation} 
we can rewrite 
$${\rm Per}(\alpha,\beta,\gamma) = \frac{1}{\pi} \int_0^1 \frac{\left(h_\alpha(t) - \frac{1}{h_\alpha(t)}\right)}{\sqrt{t(1-t)}\sqrt{h_\alpha^2(t) + \left(\beta + \frac{1}{\beta}\right)\gamma h_\alpha(t) + \gamma^2}}\, dt.$$
\noindent This expression allows us to extend analytically the definition of ${\rm Per}(\alpha,\beta,\gamma)$ for $\alpha = 1$. Indeed, if $\alpha = 1$ then $h_\alpha(t) \equiv \frac{1}{\gamma}$, and so
$${\rm Per}(1,\beta,\gamma) = \frac{1}{\pi} \int_0^1 \frac{\left(\frac{1}{\gamma} - \gamma\right) }{\sqrt{t(1-t)}\sqrt{\frac{1}{\gamma^2} + \left(\beta + \frac{1}{\beta}\right)+ \gamma^2}} \, dt= \frac{1 - \gamma^2}{\sqrt{1 + (\beta + \frac{1}{\beta})\gamma^2 + \gamma^4}},$$ what gives \eqref{eq:pera1} and completes the proof.
\end{proof}

\begin{remark}\label{sigmana}
Similarly to the definition of the period map in \eqref{eq:Per}, we can define $\sigma = \sigma(\alpha,\beta,\gamma):\cO \cap \{\alpha > 1 \}\to \R$ as the map given by \eqref{ecusigma}. The change of variables \eqref{chava} can also be used to extend $\sigma(\alpha,\beta,\gamma)$ analytically to the set $\{(\alpha,\beta,\gamma) : \alpha, \beta > 0, \gamma \geq 1\}$. In fact, arguing as in Proposition \ref{perana}, this extension also satisfies
\begin{equation}
\sigma(\alpha,\beta,\gamma) = \sigma\left(\frac{1}{\alpha},\beta,\gamma\right) = \sigma\left(\alpha,\frac{1}{\beta},\gamma\right).
\end{equation}
Moreover, if $\alpha = 1$, then
\begin{equation}\label{eq:sigma1}
\sigma(1,\beta,\gamma) = \frac{2 \pi \gamma}{\sqrt{1 + (\beta + \frac{1}{\beta})\gamma^2 + \gamma^4}}.
\end{equation}
\end{remark}

\begin{proposition}\label{pro:period}
The following assertions hold on the set $\{(\alfa,\beta,\gamma): \alfa>0, \beta>0, \gamma \geq 1\}$:
\begin{enumerate}
       \item $\frac{\partial {\rm Per}}{\partial \gamma} < 0$.
    \item ${\rm Per}(\alpha,\beta,1) = 0$, and $\lim_{\gamma \to \infty} {\rm Per}(\alpha,\beta,\gamma) = -1$. 
    \item For every $c\in (-1,0]$, the level set ${\rm Per}^{-1}(c)$ is the graph $\gamma=\gamma_c(\alfa,\beta)$ of an analytic map $\gamma_c(\alpha,\beta)$ defined for all $\alpha,\beta > 0$. Moreover, $\gamma_0 \equiv 1$ and $\gamma_c < \gamma_c'$ for all $c > c'$.
\end{enumerate}
In particular, ${\rm Per}(\alfa,\beta,\gamma)\in (-1,0)$ if $\gamma >1$.
\end{proposition}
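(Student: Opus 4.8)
The plan is to work throughout from the integral representation of ${\rm Per}$ obtained in the proof of Proposition~\ref{perana}, after a substitution that isolates the $\gamma$-dependence. Since there $h_\alpha(t)=\frac1\gamma\bigl((\alpha-\frac1\alpha)t+\frac1\alpha\bigr)$, I set $k(t):=(\alpha-\frac1\alpha)t+\frac1\alpha$, which is positive on $[0,1]$ and \emph{independent} of $\gamma$, and write $h_\alpha=k/\gamma$. Inserting this into the formula for ${\rm Per}$ and clearing the powers of $\gamma$ gives
\begin{equation*}
{\rm Per}(\alpha,\beta,\gamma)=\frac1\pi\int_0^1 G(k(t),\gamma)\,\frac{dt}{\sqrt{t(1-t)}},\qquad
G(k,\gamma):=\frac{k^2-\gamma^2}{k\sqrt{k^2+bk\gamma^2+\gamma^4}},
\end{equation*}
with $b:=\beta+\frac1\beta\ge 2$. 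The point of this form is that the whole $\gamma$-dependence now lives in the explicit elementary function $G$, while the singular weight $1/\sqrt{t(1-t)}$ and the variable $k(t)$ are $\gamma$-free.

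For item (1) I would differentiate under the integral sign, which is legitimate since $G$ and $\partial_\gamma G$ are continuous and hence bounded by a constant times the integrable weight $1/\sqrt{t(1-t)}$, uniformly for $\gamma$ in compact subsets of $[1,\infty)$. Writing $S:=k^2+bk\gamma^2+\gamma^4$, a direct computation yields
\begin{equation*}
\frac{\partial G}{\partial\gamma}=-\frac{\gamma\bigl(bk(k^2+\gamma^2)+2k^2(1+\gamma^2)\bigr)}{k\,S^{3/2}}.
\end{equation*}
As $b,k,\gamma,S>0$, the bracketed term is a sum of positive quantities, so $\partial_\gamma G<0$ pointwise and therefore $\frac{\partial {\rm Per}}{\partial\gamma}<0$. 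This sign check is the technical core of the proof, and it is precisely to make it transparent that $k$ was introduced: without separating out $\gamma$ the derivative is unpleasant, whereas here positivity of the bracket is immediate.

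Item (2) breaks into a symmetry computation and a limit. For ${\rm Per}(\alpha,\beta,1)=0$ I would go back to the original variable $x$: when $\gamma=1$ we have ${\rm Per}(\alpha,\beta,1)=\frac1\pi\int_{1/\alpha}^{\alpha}\frac{x-1/x}{\sqrt{p(x)}}\,dx$ with $p(x)=-(x-\alpha)(x-\frac1\alpha)(x+\beta)(x+\frac1\beta)$. Here $p$ is palindromic, $x^4p(1/x)=p(x)$, while the involution $x\mapsto 1/x$ preserves the interval $[1/\alpha,\alpha]$ and sends $x-1/x$ to its negative; substituting $x\mapsto 1/x$ thus exhibits the integral as equal to minus itself, so it vanishes. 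For the limit, the estimate $\sqrt{S}\ge\gamma^2$ gives $|G(k,\gamma)|\le (k^2+\gamma^2)/(k\gamma^2)\le k+1/k$ uniformly for $\gamma\ge1$, so since $G(k,\gamma)\to -1/k$ as $\gamma\to\infty$, dominated convergence gives $\lim_{\gamma\to\infty}{\rm Per}=-\frac1\pi\int_0^1\frac{dt}{k(t)\sqrt{t(1-t)}}$. Evaluating this with the elementary formula $\int_0^1\frac{dt}{(a_0+a_1 t)\sqrt{t(1-t)}}=\frac{\pi}{\sqrt{a_0(a_0+a_1)}}$ at $a_0=\frac1\alpha$, $a_0+a_1=\alpha$, where $\sqrt{a_0(a_0+a_1)}=1$, produces exactly $\pi$, and hence the limit equals $-1$.

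Item (3) and the closing assertion then follow by assembling (1) and (2). For each fixed $(\alpha,\beta)$ the analytic map $\gamma\mapsto{\rm Per}(\alpha,\beta,\gamma)$ is strictly decreasing, by (1), from the value $0$ at $\gamma=1$ to the limit $-1$, by (2); hence it is an analytic diffeomorphism of $[1,\infty)$ onto $(-1,0]$. I would define $\gamma_c(\alpha,\beta)$ as the unique root of ${\rm Per}=c$, whose analyticity in $(\alpha,\beta)$ follows from the implicit function theorem since $\partial_\gamma{\rm Per}\neq0$ by (1); that $\gamma_0\equiv1$ restates ${\rm Per}(\cdot,\cdot,1)=0$, and $\gamma_c<\gamma_{c'}$ for $c>c'$ is the strict $\gamma$-monotonicity read in reverse. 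The same monotonicity together with the boundary values $0$ and $-1$ gives ${\rm Per}(\alpha,\beta,\gamma)\in(-1,0)$ whenever $\gamma>1$, which is the final claim. The main obstacle, as noted, is the positivity underlying item (1); everything else is bookkeeping built on it.
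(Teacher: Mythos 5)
Your proof is correct and follows essentially the same route as the paper: an integral representation that isolates the $\gamma$-dependence in an explicit algebraic kernel, differentiation under the integral sign for item (1), the palindromic symmetry $x^4p(1/x)=p(x)$ at $\gamma=1$ plus dominated convergence for item (2), and monotonicity together with the implicit function theorem for item (3). The only cosmetic difference is your substitution $x=k(t)/\gamma$ (the paper reduces to $\alpha\geq 1$ by the symmetry ${\rm Per}(\alpha,\beta,\gamma)={\rm Per}(1/\alpha,\beta,\gamma)$ and then uses $\mu=1/(\gamma x)$, treating $\alpha=1$ separately), which handles all $\alpha>0$ uniformly but makes the $\gamma\to\infty$ limit require the extra elementary evaluation $\int_0^1 \frac{dt}{k(t)\sqrt{t(1-t)}}=\pi$ instead of a constant limit integrand.
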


\begin{proof}
To prove item (1), we first observe that the function $g(x;a,b)$ defined as
\begin{equation}\label{def:ge}
g(x;a,b) = \frac{1 - x^2}{\sqrt{a +b x^2 + x^4}}
\end{equation}
satisfies $g_x< 0$ for all $x, a, b > 0$. Also, note that by the symmetry condition \eqref{eq:invper}, we can assume that $\alpha \geq 1$. If $\alpha = 1$, we have by \eqref{eq:pera1} that 
\begin{equation}\label{peralfa1}
{\rm Per}(1,\beta,\gamma) = g(\gamma;1, \beta + 1/\beta),
\end{equation} 
so clearly $\frac{\partial {\rm Per}}{\partial \gamma}(1,\beta,\gamma) < 0$. Similarly, if $\alpha > 1$, we consider the change of variable $\mu=1/(\gamma x)$ in \eqref{eq:Per} to obtain
\begin{equation}\label{eq:perg}
   {\rm Per}(\alpha,\beta,\gamma) = \frac{1}{\pi}\int_\frac{1}{\alpha}^\alpha \frac{g(\gamma \mu;\mu^2, \left(\beta +1/\beta\right)\mu)}{\sqrt{(\alpha - \mu)(\mu -1/\alfa)}}\, d\mu. 
\end{equation}  
Thus, $\frac{\partial {\rm Per}}{\partial \gamma}$ is the integral of a strictly negative function, and so $\frac{\partial {\rm Per}}{\partial \gamma}< 0$. This proves the first assertion.

We next show that ${\rm Per}(\alpha,\beta,1) \equiv 0$. The case $\alpha = 1$ is immediate by \eqref{eq:pera1}. For $\alpha > 1$, we observe that the polynomial $p(x)$ defined in \eqref{def:pab} satisfies the relation $p(x) = x^4\, p(1/x)$ when $\gamma = 1$. By considering the change of variable $\mu =1/x$, we obtain
$$
{\rm Per}(\alpha,\beta,1) = \frac{1}{\pi}\int_\frac{1}{\alpha}^\alpha \frac{ x-1/x}{\sqrt{p(x)}} dx = \frac{1}{\pi}\int_\frac{1}{\alpha}^\alpha \frac{ 1/\mu-\mu}{\sqrt{\mu^4 \, p(1/\mu)}} d\mu = -{\rm Per}(\alpha,\beta,1),$$
and therefore ${\rm Per}(\alpha,\beta,1) \equiv 0$.

To complete the proof of item (2), it remains to show that ${\rm Per}(\alfa,\beta,\gamma)\to -1$ as $\gamma\to \8$. The case $\alfa=1$ follows immediately from \eqref{peralfa1} and the fact that $\lim_{x \to \infty} g(x;a,b) = -1$, for any $a,b>0$. Similarly the case $\alpha > 1$ follows from \eqref{eq:perg} and the equality
$$\int_\frac{1}{\alpha}^\alpha \frac{d\mu}{\sqrt{(\alpha - \mu)(\mu - 1/\alfa)}} =\pi.$$ This proves item (2).


As a result of the first two assertions, we have that ${\rm Per}(\alfa,\beta,\gamma)\in (-1,0]$ when $\alfa,\beta>0$ and $\gamma\geq 1$. Moreover, for any $c\in (-1,0]$, the level set ${\rm Per}^{-1}(c)$ is a graph $\gamma= \gamma_c(\alfa,\beta)$, where $\gamma_c$ is a real analytic function on $(0,\8)\times (0,\8)$, with $\gamma_c\geq 1$. Note that the real analyticity of $\gamma_c$ follows from item (1) and the analyticity of the period map, proved in Proposition \ref{perana}. Also, note that $\gamma_0(\alpha,\beta) \equiv 1$, by item (2). By the monotonicity of ${\rm Per}$ with respect to $\gamma$, we have that $\gamma_c< \gamma_{c'}$ whenever $c> c'$.  This completes the proof.
\end{proof}

\begin{remark}
Let $\alpha = 1$. Defining $B = \frac{1}{2}(\beta +1/\beta)$, $C_c := \frac{1}{2}\left(\gamma_c(1,\beta) 
 -1/\gamma_c(1,\beta) \right)$, equation \eqref{eq:pera1} can be rewritten as
\begin{equation}\label{eq:Ckappa}
    C_c^2 = \frac{c^2(B + 1)}{2(1 - c^2)}.
    \end{equation}
\end{remark}

\subsection{Existence of CMC annuli with spherical free boundary}


In the next Lemma we use the notation $u_1=u_1(\alfa,\beta,\gamma)$ introduced in Proposition \ref{prosistem1} and Remark \ref{rem:u1an}.

\begin{lemma}\label{ustar} For any $(\alpha,\beta,\gamma)\in \cO\cap \{\gamma>1\}$ there exists a unique value $u^* \in (0,u_1)$ such that $\hat{c}_3(u^*)=0$, where $\hat{c}_3(u)$ denotes the third coordinate of the center of the sphere $\mathbf{S}_u$ (see Lemma \ref{lem:wen}). Moreover, the map $u^*(\alfa,\beta,\gamma): \cO\cap \{\gamma>1\} \flecha\R$ is real analytic.
\end{lemma}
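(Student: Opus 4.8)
The plan is to exploit the explicit formula \eqref{eq:wen} for the derivative of the center $\hat{c}(u)$. Since $\gamma>1$, the initial condition \eqref{inicondi} together with \eqref{const2} gives $y'(0)=\tfrac12(A+B)C>0$, because $A,B\geq 1$ and $C=\tfrac12(\gamma-\gamma^{-1})>0$. By Proposition \ref{prosistem1} we have $y(u)>0$ for all $u\in(0,u_1)$, so \eqref{eq:wen} yields
\[
\hat{c}_3'(u)=\frac{y'(0)}{y(u)^2}>0 \qquad (u\in(0,u_1)).
\]
Thus $\hat{c}_3$ is real analytic and strictly increasing on $(0,u_1)$, which immediately gives \emph{uniqueness} of a zero, if one exists.

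For \emph{existence}, I would analyze the behaviour of $\hat{c}_3$ at the two endpoints. The function $y$ is real analytic and, since $y'(0)>0$, not identically zero; hence its zeros at $u=0$ and at $u=u_1$ (Proposition \ref{prosistem1}) have finite order. Consequently $1/y^2$ has a pole of order at least two at each endpoint, so it is non-integrable there. Fixing any reference point $u_0\in(0,u_1)$ and writing $\hat{c}_3(u)=\hat{c}_3(u_0)+y'(0)\int_{u_0}^{u}\frac{dt}{y(t)^2}$, the integral tends to $-\infty$ as $u\to 0^+$ and to $+\infty$ as $u\to u_1^-$. Therefore $\hat{c}_3$ maps $(0,u_1)$ onto all of $\R$, and by the intermediate value theorem together with the strict monotonicity above there is a unique $u^*\in(0,u_1)$ with $\hat{c}_3(u^*)=0$.

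It remains to prove analyticity of $u^*(\alfa,\beta,\gamma)$, for which I would apply the implicit function theorem to the map $G(u,\alfa,\beta,\gamma):=\hat{c}_3(u)$ on the domain $\{(u,\alfa,\beta,\gamma): \gamma>1,\ u\in(0,u_1)\}$. Formula \eqref{gencen}, evaluated at $v=0$ (recall that $\hat{c}$ is independent of $v$), expresses $\hat{c}_3$ as an explicit analytic combination of $\psi,\psi_u,N,\omega,y,z$ on the set $\{y\neq 0\}$; by the analytic dependence of solutions of \eqref{system1} and of the moving frame of $\Sigma$ on initial conditions and on the parameters $(\alfa,\beta,\gamma)$, this makes $G$ jointly real analytic. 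Since $\partial_u G(w)=y'(0)/y(u^*)^2>0\neq 0$ at $w=(u^*,\alfa,\beta,\gamma)$, and the endpoint $u_1$ of the domain is itself analytic by Remark \ref{rem:u1an}, the implicit function theorem yields a real analytic solution $u^*(\alfa,\beta,\gamma)$, as claimed.

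The main obstacle I anticipate is justifying the divergence of $\hat{c}_3$ at $u_1$: unlike the zero at $u=0$, whose order is pinned down by $y'(0)>0$, a priori one does not control the order of vanishing of $y$ at $u_1$. The key observation resolving this is that the precise order is irrelevant — any finite-order zero of the nonconstant analytic function $y$ makes $1/y^2$ non-integrable, so the blow-up of $\hat{c}_3$, and hence the required sign change, holds regardless.
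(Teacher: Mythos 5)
Your proof is correct and follows essentially the same route as the paper's: strict monotonicity of $\hat{c}_3$ from \eqref{eq:wen}, divergence of $\hat{c}_3$ at both endpoints of $(0,u_1)$ to get existence and uniqueness, and then the implicit function theorem applied to $\hat{c}_3$ viewed, via \eqref{gencen} and analytic dependence of ODE solutions on parameters, as an analytic function of $(u,\alfa,\beta,\gamma)$ with $\partial_u \hat{c}_3>0$. The only (harmless) difference is that you obtain the endpoint blow-up from the non-integrability of $1/y^2$ at the finite-order zeros of the analytic function $y$, whereas the paper reads it off from the radius formula \eqref{radan}; both justifications are valid.
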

\begin{proof} 
To start, fix $(\alpha,\beta,\gamma)\in \cO\cap \{\gamma>1\}$. Then, we have $y'(0)>0$, by \eqref{inicondi}. According to \eqref{eq:wen}, this means that $\hat{c}_3$ is strictly increasing. Moreover, for $u\in (0,u_1)$, we have from \eqref{radan} that $\hat{c}_3(u) \to -\infty$ as $u \to 0$ and $\hat{c}_3(u) \to \infty$ as $u \to u_1$. So, there is a unique $u^* \in (0,u_1)$ where $\hat{c}_3(u^*)=0$.



We will prove next that the map $(\alfa,\beta,\gamma)\in \cO\cap \{\gamma>1\} \mapsto u^*(\alpha,\beta,\gamma)$ is real analytic. To do so, we define first
$$\hat{C}_3(u,\alpha,\beta,\gamma) := \hat{c}_3(u),$$
\noindent that is, we see $\hat{c}_3$ as a function depending also on the parameters $(\alpha,\beta,\gamma)$. Here, we assume that $u\in (0,u_1)$, and so $\hat{C}_3(u,\alfa,\beta,\gamma)$ takes finite real values. Note that $\hat{C}_3(u,\alfa,\beta,\gamma)=0$ if and only if $u=u^*(\alfa,\beta,\gamma)$. Also, 
 $(\hat{C}_3)_u> 0$, by \eqref{eq:wen}. Hence, by the implicit function theorem, $u^*(\alfa,\beta,\gamma)$ will be real analytic if so is $\hat{C}_3$.
 
Now, according to \eqref{gencen}, $\hat{C}_3$ will be analytic if so are $y(u), z(u), \psi(u,0)$ when seen as functions depending on $(\alpha,\beta,\gamma)$. This analyticity for $y(u),z(u)$ is immediate, by \eqref{system1} and the usual regularity theory for ODE systems. The same can be said about $\psi(u,0)$: this curve is the solution to the differential system
\begin{equation}
\left\{\def\arraystretch{1.3} \begin{array}{lll} \omega_u & = & y(u) \cosh\omega + z(u) \sinh\omega,\\ 
\psi_{uu} & = & \omega_u\psi_u + \kappa_1e^{2\omega} N ,\\
N_u  & = & -\kappa_1\psi_u ,\\
\end{array} \right.
\end{equation}
where $\kappa_1 = e^{-\omega} \cosh \omega$, with the initial conditions
\begin{equation}
\omega(0,0)=\frac{1}{\alpha \gamma}, \hspace{0.2cm} \psi(0,0)=(0,0,0), \hspace{0.3cm} \psi_u(0,0)= (0,0,e^{\omega(0,0)}), \hspace{0.3cm} N(0,0)= (1,0,0),
\end{equation}
\noindent which depends analytically on $(\alpha,\beta,\gamma) \in \cO$.
\end{proof}
%
%
%
%

\begin{definition}\label{def:sigma0}
We will let $\Sigma_0$ denote the subset of the surface $\Sigma$ of Definition \ref{def:Sigma} given by the restriction of the immersion $\psi(u,v)$ to $[-u^*,u^*]\times \R$.
\end{definition}

\begin{proposition}\label{thm:Per1n}

Let $(\alpha,\beta,\gamma)\in \cO\cup \{\gamma>1\}$ so that ${\rm Per}(\alpha,\beta,\gamma)=-1/n$ for some $n\in \N$, $n\geq 2$. Then, $\psi(u,v+ 2n \sigma)=\psi(u,v)$ (see Remark \ref{sigmana}). In particular, by considering the quotient of $[-u^*,u^*]\times \R$ by the relation $(u,v)\sim (u,v+2n\sigma)$, we can view $\Sigma_0$ as a compact annulus with constant mean curvature $H=1/2$ for which the following properties hold:
\begin{enumerate}
\item
Each coordinate $v$-curve, and in particular each component of $\parc \Sigma_0$, is a closed spherical curvature line of $\Sigma_0$.
 \item
$\Sigma_0$ is symmetric with respect to the $x_3=0$ plane, and with respect to $n$ equiangular vertical planes of $\R^3$ that intersect along a common vertical line. This line is actually the vertical line $L$ of Lemma \ref{lem:wen}.
 \item
The closed planar geodesic $\psi(0,v): [0,2n\sigma] \equiv \mathbb{S}^1 \to \{x_3 = 0\}$ has rotation index equal to $-1$.
\item
Both components of $\parc \Sigma_0$ lie in the same sphere of $\R^3$ of radius $R$, and $\Sigma_0$ intersects this sphere along $\parc \Sigma_0$ at a constant angle $\theta$. Here, $R,\theta$ are given by $$R^2= \frac{1+(z(u^*)-y(u^*))^2}{y^2(u^*)} , \hspace{1cm} \tan(\theta) = \frac{1}{z(u^*)-y(u^*)},$$ where $u^*$ is the value defined in Lemma \ref{ustar}.
 \item
If $\alpha > 1$, then $\Sigma_0$ is not rotational, and has a prismatic symmetry group of order $4n$.
 \item
If $\alpha = 1$, then $\Sigma_0$ is a compact piece of a nodoid and the line $L$ of Lemma \ref{lem:wen} is the rotation axis of $\Sigma_0$.
\end{enumerate}
\end{proposition}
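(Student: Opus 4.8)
The plan is to assemble Proposition \ref{thm:Per1n} from the structural results already established, organizing the argument around the hypothesis ${\rm Per}(\alpha,\beta,\gamma)=-1/n$. First I would note that by Proposition \ref{pro:period}(2) and the hypothesis $\gamma>1$, we have ${\rm Per}\in(-1,0)$, so $\Theta=\pi\,{\rm Per}=-\pi/n\neq 0$, placing us squarely in the setting of Corollary \ref{cor:sim} with $\Theta/\pi=m/n$ where $m=-1$. This is the crucial observation: the period hypothesis is exactly what makes $\Theta/\pi$ rational with numerator $-1$. Invoking Corollary \ref{cor:sim} directly gives the $2n\sigma$-periodicity $\psi(u,v+2n\sigma)=\psi(u,v)$, the annulus topology after quotienting, the symmetry with respect to $x_3=0$ and to $n$ equiangular vertical planes meeting along the common vertical line $L$ of Lemma \ref{lem:wen}, and the rotation index of the planar geodesic. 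Thus items (2) and (3) are immediate from Corollary \ref{cor:sim}, with the rotation index being $m=-1$ in (3), and the periodicity statement follows at once.

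Next I would restrict attention to $\Sigma_0$, the piece parametrized on $[-u^*,u^*]\times\R$. The point of $\Sigma_0$ versus the full $\Sigma$ is the choice of boundary at $u=\pm u^*$. For item (1), each $v$-curve $v\mapsto\psi(u,v)$ is spherical by Lemma \ref{lem:esfecur}, and the periodicity just established makes it closed; in particular the two boundary curves $v\mapsto\psi(\pm u^*,v)$ are closed spherical curvature lines. For item (4) I would use Lemma \ref{lem:esfecur} and Remark \ref{re:radan}: the curvature line at $u=u^*$ lies in a sphere $\mathbf{S}_{u^*}$ whose radius and intersection angle are given by \eqref{radan}, yielding the stated formulas for $R$ and $\tan\theta$ in terms of $y(u^*),z(u^*)$. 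The reflective symmetry $u\mapsto -u$ across the $x_3=0$ plane (Remark \ref{re:radan}) identifies the sphere $\mathbf{S}_{-u^*}$ as the reflection of $\mathbf{S}_{u^*}$; the defining property of $u^*$ from Lemma \ref{ustar}, namely $\hat c_3(u^*)=0$, forces the center of $\mathbf{S}_{u^*}$ to lie in the plane $x_3=0$, so its reflection $\mathbf{S}_{-u^*}$ has the same center and radius and hence coincides with $\mathbf{S}_{u^*}$. This is precisely why both boundary components lie in the \emph{same} sphere, and I expect this symmetry-plus-centering argument to be the main content of item (4) and the genuine heart of the proposition.

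For items (5) and (6) I would separate the cases $\alpha>1$ and $\alpha=1$. When $\alpha=1$, Section \ref{sec:nodoid} already shows that the construction produces a rotationally invariant Delaunay surface, specifically a nodoid, with $L$ as rotation axis; $\Sigma_0$ is then the compact piece cut out by $u=\pm u^*$, giving item (6) directly. When $\alpha>1$, I would argue that $\Sigma_0$ cannot be rotational: by Proposition \ref{prosistem1} (applicable since $\gamma>1$) the function $y(u)$ is not identically zero and the radius $R(u)$ of the spheres $\mathbf{S}_u$ varies with $u$, so the spherical curvature lines do not all have equal radii, ruling out rotational symmetry about $L$. The symmetry group is generated by the $n$ reflections in the vertical planes $\Omega_1,\dots,\Omega_n$ together with the reflection in $x_3=0$; these $n$ vertical mirror planes arranged equiangularly about $L$ generate a dihedral action $D_n$, and combined with the horizontal reflection (which commutes with them) we obtain a prismatic group of order $4n$ isomorphic to $D_n\times\Z_2$. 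I would finish by confirming the constant mean curvature value $H=1/2$, which is already recorded for $\Sigma$ just after \eqref{fforms} and is inherited by the piece $\Sigma_0$.

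The step I expect to require the most care is item (4): establishing that the two boundary circles lie in \emph{one} common sphere rather than two reflected copies. Everything reduces to the single scalar condition $\hat c_3(u^*)=0$ from Lemma \ref{ustar}, which pins the center onto the symmetry plane $x_3=0$; once this is in hand the coincidence of $\mathbf{S}_{u^*}$ and $\mathbf{S}_{-u^*}$ is forced by the reflective symmetry, and the explicit $R,\theta$ follow from \eqref{radan}. The remaining items are essentially bookkeeping that translates the already-proven statements of Corollary \ref{cor:sim}, Lemma \ref{lem:esfecur}, and Section \ref{sec:nodoid} into the language of $\Sigma_0$.
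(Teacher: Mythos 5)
Your outline reproduces the paper's proof for items (1)--(4) in the case $\alpha>1$ --- in particular your treatment of item (4), forcing the center of $\mathbf{S}_{u^*}$ onto the plane $x_3=0$ via $\hat{c}_3(u^*)=\hat{c}_3(-u^*)=0$ and then quoting \eqref{radan}, is exactly the paper's argument --- but there are genuine gaps elsewhere. The main one: you invoke Corollary \ref{cor:sim} for the periodicity $\psi(u,v+2n\sigma)=\psi(u,v)$ and for items (2)--(3) without excluding the case $\alpha=1$. Corollary \ref{cor:sim} and Proposition \ref{pro:sim} are proved under the standing hypothesis $\alpha>1$ of Section \ref{sec:3}, and they genuinely break down when $\alpha=1$: there $\omega(0,v)$ is constant, $x(v)\equiv 1/\gamma$ admits no half-period $\sigma$ in the sense of Proposition \ref{pro:sim}, and both ${\rm Per}(1,\beta,\gamma)$ and $\sigma(1,\beta,\gamma)$ are defined only through the analytic extensions \eqref{eq:pera1} and \eqref{eq:sigma1}, not by the integrals \eqref{ecutheta2}, \eqref{ecusigma}. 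The paper therefore runs a separate argument for $\alpha=1$: it computes the total turning $\Theta=\int_0^{2n\sigma}\sinh\omega(0,v)\,dv=n\sigma\left(\frac{1}{\gamma}-\gamma\right)$, uses \eqref{eq:pera1} and \eqref{eq:sigma1} together with ${\rm Per}(1,\beta,\gamma)=-1/n$ to get $\Theta=-2\pi$, and then concludes periodicity from the fundamental theorem of surface theory; this is also what yields the rotation index $-1$ in item (3). Your proposal only splits into cases for items (5)--(6), so the $\alpha=1$ case of the periodicity claim and items (1)--(4) is left unproved --- and this is precisely the case $\mu=0$ used to start the analytic family in the proof of Theorem \ref{th:main}, so it cannot be dropped.

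Two further problems occur in item (5). Your non-rotationality argument is incorrect: the fact that the radii $R(u)$ of the spheres $\mathbf{S}_u$ vary with $u$ is perfectly compatible with rotationality, since for a surface of revolution the parallels lie in spheres of varying radius centered on the axis --- indeed the nodoid itself ($\alpha=1$), which is rotational, has non-constant $R(u)$. The paper's argument is different: for $\alpha>1$ the function $\omega(0,v)$ is non-constant, so the planar geodesic $\Gamma(v)=\psi(0,v)$ has non-constant curvature $\kappa_2(0,v)=e^{-\omega(0,v)}\sinh\omega(0,v)$, hence is not a circle, hence $\Sigma_0$ cannot be rotational. Secondly, you only verify that $\Sigma_0$ \emph{admits} the $4n$ prismatic symmetries; to conclude that its symmetry group \emph{is} a prismatic group of order $4n$ one must exclude additional symmetries. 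The paper does this by showing that any ambient isometry preserving $\Sigma_0$ must preserve $\Gamma$ (it is the only planar $v$-curvature line, and the $u$-curvature lines are not closed), hence restricts to a planar symmetry of $\Gamma$; an extra mirror line would force a local extremum of $\kappa_2$ along $\Gamma$ strictly between two consecutive planes $\Omega_k$, $\Omega_{k+1}$, contradicting the monotonicity of $x(v)$ (hence of $\kappa_2(0,v)$) on each such interval established in Proposition \ref{pro:sim}.
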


\begin{proof} Assume first that $\alpha > 1$. The equality $\psi(u,v) = \psi(u,v + 2n\sigma)$ and the first three items follow from Corollary \ref{cor:sim}. Item (4) is a consequence of \eqref{radan} and the fact that $\hat{c}_3(u^*) = \hat{c}_3(-u^*) = 0$, which imply that the spheres $\mathbf{S}_{u^*}$ and $\mathbf{S}_{-u^*}$ coincide. 

%
%
%
%
%
%

To prove item (5), note that the curvature of the closed planar curve $\Gamma\equiv \Gamma(v)= \psi(0,v)$, which is given by 
\begin{equation}\label{forcur}
\kappa_2(0,v) = e^{-\omega(0,v)}\sinh \omega(0,v),
\end{equation} is not constant. Thus, $\Gamma$ is not a circle. Consequently, $\Sigma_0$ cannot be rotational. We next show that the symmetry group of $\Sigma_0$ is generated by the symmetries described in (2). Since this group corresponds to a prismatic subgroup or order $4n$ of the isometry group of $\R^3$, this would prove assertion (5).

Let $\Phi$ be an isometry of $\R^3$ that leaves $\Sigma_0$ invariant. Since $\Gamma$ is the only $v$-curvature line of $\Sigma_0$ that is planar, and the $u$-curvature lines of $\Sigma_0$ are not closed, we deduce that $\Phi(\Gamma)=\Gamma$. In this way, any symmetry $\Phi$ of $\Sigma_0$ is of the form $\Phi = (\Psi,\pm {\rm Id})$, where $\Psi$ is an isometry of the $\{x_3=0\}$ plane, with $\Psi(\Gamma)=\Gamma$. That is, $\Psi$ is an element of the planar symmetry group of $\Gamma$.

Besides, by item (2), $\Gamma$ is symmetric with respect to $n$ vertical planes $\{\Omega_1,\dots, \Omega_n\}$. Hence, we deduce that the planar symmetry group of $\Gamma$ is a dihedral group $D_{n'}$ that contains at least the reflections with respect to $n$ equiangular lines $\{L_1,\dots, L_n\}$, where $L_k=\Omega_k\cap \{x_3=0\}$. So, $n'\geq n$. If $n'>n$, there would exist an additional symmetry line $L'$ of $\Gamma$. At the intersection points of $\Gamma$ and $L'$, the curvature $\kappa_2$ of $\Gamma$, given by \eqref{forcur}, has a local maximum or minimum. But on the other hand, it was proved in Proposition \ref{pro:sim} that the curvature $\kappa_2$ between two consecutive lines $L_k, L_{k+1}$ is monotonic. This shows that this additional line $L'$ does not exist. Thus, $n=n'$, and the symmetry group of $\Sigma_0$ is generated by the isometries described in assertion (2).

Finally, we assume $\alpha = 1$. By our discussion in Section \ref{sec:nodoid}, the corresponding surface $\Sigma_0$ is the compact piece of nodoid obtained by rotating the profile curve $u \mapsto \psi(u,0)$, with $u\in [-u^*,u^*]$. The curves $v\mapsto \psi(u,v)$ are in this case horizontal circles, whose centers lie in the rotation axis of $\Sigma_0$. Moreover, the function $\omega(u,v)$ only depends on $u$, and $e^{\omega(0,v)} \equiv 1/\gamma$. 
Items (1) and (4) follow immediately from this discussion. Item (6) also follows after noting that if a horizontal circle lies in a sphere, then the vertical line that passes through the center of the circle also passes through the center of the sphere.


Now we will prove that $\psi(u,v) = \psi(u, v + 2n \sigma)$. Let $\xi(v):= e^{-\omega(0,v)}\Gamma'(v) \equiv \gamma \Gamma'(v)$ be the unit tangent vector of $\Gamma(v)$. According to \eqref{ecutheta}, if we denote by $\theta(v)$ the angle between $\xi(v)$ and a fixed direction of $\R^2$, the variation of $\theta(v)$ between $v = 0$ and $v = 2n\sigma$ is given by 
$$\Theta = \int_0^{2n\sigma}\sinh \omega(0,v) dv = n \sigma\left(\frac{1}{\gamma} - \gamma\right).$$
It follows then from \eqref{eq:pera1} and \eqref{eq:sigma1} together with ${\rm Per}(1,\beta,\gamma)=-1/n$ that $\Theta = -2\pi$. From here, we see that the values of the moving frame of $\psi$ at $(u,v)=(0,2n\sigma)$ agree with the ones at $(u,v)=(0,0)$, given by \eqref{inimovi}. Since $\omega$ does not depend on $v$, by the fundamental theorem of surface theory we obtain $\psi(u,v) = \psi(u, v + 2n \sigma)$. This proves item (3).



It just remains to prove item (2). This follows directly, since $\Sigma_0$ is symmetric with respect to the $x_3 = 0$ plane (see Remark \ref{re:radan}), and also with respect to any vertical plane that contains the axis $L$, by rotational symmetry. This completes the proof.
%
%
%
\end{proof}

%
%
%


In the next theorem we use the notation $u_1=u_1(\alfa,\beta,\gamma)$ of Proposition \ref{prosistem1}, as well as $\tau=\tau(\alfa,\beta,\gamma)$ of Proposition \ref{prosistem2}, and $u^*=u^*(\alfa,\beta,\gamma)$ of Lemma \ref{ustar}.

\begin{theorem}\label{thm:Gamma}
Let $\Upsilon:[0,1]\flecha \overline{\cW}$ be a continuous arc satisfying:
\begin{enumerate}
\item
$\Upsilon(0)=(\alfa_0,\alfa_0,\gamma_0)\in \cW.$
\item
$\Upsilon(\rho) \in \cW$ for every $\rho \in [0,1)$.
\item
$\Upsilon(1)= (\alfa_1,\beta_1,\gamma_1)\notin \cW$, where $\gamma_1>1$ and the values $(A_1,B_1,C_1)$ defined in terms of $\Upsilon(1)$ by \eqref{const2} satisfy
\begin{equation}\label{eq:igualdadlimite}
C_1^2 = \frac{(A_1 - B_1)^2}{4 A_1 B_1}.
\end{equation}
\end{enumerate}
Then there exists some $\rho^*\in (0,1)$ such that $u^*(\Upsilon(\rho^*))=\tau(\Upsilon(\rho^*))$.
\end{theorem}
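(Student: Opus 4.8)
The plan is to reduce the statement to the intermediate value theorem applied to the continuous function
\[
D(\rho) := u^*(\Upsilon(\rho)) - \tau(\Upsilon(\rho)), \qquad \rho \in [0,1),
\]
and to prove that $D$ changes sign. First I would record that $D$ is continuous on $[0,1)$: for $\rho \in [0,1)$ we have $\Upsilon(\rho) \in \cW \subseteq \cO \cap \{\gamma > 1\}$, where $\tau$ is real analytic by Proposition \ref{prosistem2} and $u^*$ is real analytic by Lemma \ref{ustar}; since $\Upsilon$ is continuous, so is $D$. It then suffices to show $D(0) < 0$ and $\lim_{\rho \to 1^-} D(\rho) > 0$, after which choosing $\rho_1 < 1$ with $D(\rho_1) > 0$ and applying the intermediate value theorem on $[0,\rho_1]$ produces the desired $\rho^* \in (0,\rho_1) \subset (0,1)$.

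For the left endpoint I would use that $\Upsilon(0) = (\alfa_0,\alfa_0,\gamma_0)$ has $\alfa_0 = \beta_0$. By \eqref{inicondi} this forces $z'(0)=0$, and since $z\equiv 0$ solves the second equation of \eqref{system1} with these data, uniqueness gives $z \equiv 0$. Then $y(\tau)=z(\tau)$ reduces to $y(\tau)=0$, whose first positive solution is $u_1$ by Proposition \ref{prosistem1}; hence $\tau(\Upsilon(0)) = u_1(\Upsilon(0))$. As $u^* \in (0,u_1)$ strictly by Lemma \ref{ustar}, we conclude $D(0) = u^*(\Upsilon(0)) - u_1(\Upsilon(0)) < 0$.

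The heart of the matter, and what I expect to be the main obstacle, is the right endpoint, since $\tau$ is defined by a first-hitting condition and $\Upsilon(1)$ lies precisely on $\partial \cW$, where that condition degenerates. I would prove that $\tau(\Upsilon(\rho)) \to 0$ as $\rho \to 1$. At $\Upsilon(1)$, condition \eqref{eq:igualdadlimite} is exactly the equality case \eqref{fbcon2}, so Remark \ref{rem:ymenorz} applies (note $\gamma_1 > 1$ and $\beta_1 \geq \alfa_1 \geq 1$ because $\Upsilon(1) \in \overline{\cW}$), yielding $\phi_1(u) := y_1(u) - z_1(u) < 0$ for all small $u > 0$. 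Fixing $\epsilon > 0$ with $\phi_1 < 0$ on $(0,\epsilon]$, continuous dependence of the solutions of \eqref{system1} on the parameters gives $\phi_{\Upsilon(\rho)}(\epsilon) \to \phi_1(\epsilon) < 0$, so $\phi_{\Upsilon(\rho)}(\epsilon) < 0$ for $\rho$ near $1$; but $\phi_{\Upsilon(\rho)}'(0) > 0$ on $\cW$ by \eqref{fbcon}, so $\phi_{\Upsilon(\rho)}$ is positive just after $0$ and must vanish somewhere in $(0,\epsilon)$. Its first zero is therefore $< \epsilon$, and by Proposition \ref{prosistem2} this first zero is exactly $\tau(\Upsilon(\rho))$, whence $\tau(\Upsilon(\rho)) < \epsilon$. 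This proves $\tau(\Upsilon(\rho)) \to 0$.

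To finish, $u^*$ is continuous at $\Upsilon(1) \in \cO \cap \{\gamma > 1\}$ by Lemma \ref{ustar}, so $u^*(\Upsilon(\rho)) \to u^*(\Upsilon(1))$, which is strictly positive since $u^* \in (0,u_1)$. Combining the two limits gives $\lim_{\rho \to 1^-} D(\rho) = u^*(\Upsilon(1)) > 0$, which together with $D(0) < 0$ closes the argument. The only genuinely delicate point is controlling the shrinking "positive hump" of $\phi$ near $u=0$ as the initial slope $\phi'(0) \downarrow 0$ along the arc; I would handle this through continuous dependence on parameters, as above, rather than through the third-order expansion of $\phi$ at $0$ (which independently suggests $\tau \sim \sqrt{6\,\phi'(0)/y'(0)} \to 0$, but is messier to make uniform).
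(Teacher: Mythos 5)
Your proposal is correct and follows essentially the same route as the paper: both show $\tau=u_1>u^*$ at $\Upsilon(0)$ via $z\equiv 0$, show $\tau(\Upsilon(\rho))\to 0$ as $\rho\to 1^-$ using Remark \ref{rem:ymenorz} at the equality case \eqref{eq:igualdadlimite} together with continuous dependence on parameters, and conclude by the intermediate value theorem. The only cosmetic difference is that the paper phrases the limit $\tau\to 0$ as a contradiction argument along a sequence, whereas you argue it directly at a fixed $\epsilon$; the underlying idea is identical.
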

\begin{proof}
We will prove that $u^*(\Upsilon(0)) < u_1(\Upsilon(0))= \tau(\Upsilon(0)) $ and that $\lim_{\rho \to 1^-}\tau(\Upsilon(\rho)) = 0 < u^*(\Upsilon(1))$, from where the existence of the desired value $\rho^* \in (0,1)$ follows. 
Note that, by Lemma \ref{ustar}, it holds $0 < u^*(\Upsilon(\rho)) < u_1(\Upsilon(\rho))$ for all $\rho \in [0,1]$. Also, note that $\tau(\Upsilon(\rho))$ is only defined for $\rho \in [0,1)$, since $\Upsilon(1) \notin \cW$.


To prove that $u_1(\Upsilon(0))=\tau(\Upsilon(0))$ we observe that, for $\Upsilon(0) = (\alpha_0,\alpha_0,\gamma_0)$, the solution $(y(u),z(u))$ to \eqref{system1}-\eqref{inicondi} satisfies $z(u) \equiv 0$ and $y(u) > 0$ for every $u\in (0,u_1)$. As a consequence, the first $\tau > 0$ for which $y(\tau)=z(\tau)(=0)$ must be $\tau = u_1$. That is, it holds $u_1(\Upsilon(0))=\tau(\Upsilon(0))$.

Now we claim that $\lim_{\rho \to 1^-}\tau(\Upsilon(\rho)) = 0$. Arguing by contradiction, assume that there is a sequence $(\rho_n)_{n}\to 1$ such that $\tau(\Upsilon(\rho_n)) > \varepsilon$ for some $\varepsilon > 0$. Let $(y_n(u),z_n(u))$ be the solutions to system \eqref{system1} with initial condition \eqref{inicondi} associated to the values $\Upsilon(\rho_n)$, and denote by $(y(u),z(u))$ the corresponding solution for the limit value $\Upsilon(1)$. 

The continuity of solutions to ODEs with respect to initial conditions and parameters shows that $(y_n(u), z_n(u))$ converges pointwise to $(y(u),z(u))$. By our hypothesis on $(\rho_n)_n$, we have $y_n(u)- z_n(u) > 0$ for $u \in (0,\ep)$. Thus, $y(u) \geq z(u)$ for every $u\in [0,\ep]$. However, Remark \ref{rem:ymenorz} and \eqref{eq:igualdadlimite} let us deduce that $y(u) - z(u) < 0$ for $u > 0$ sufficiently small, leading to a contradiction.
\end{proof}

\begin{remark}\label{rem:cambio}
If the curve $\Upsilon$ is real analytic, then so will be the function $$f(\rho) := u^*(\Upsilon(\rho)) - \tau(\Upsilon(\rho)): [0,1) \to \R.$$
Also, from the proof of Theorem \ref{thm:Gamma}, we have $f(0)<0$. Thus, since $f$ changes sign in $(0,1)$ and has isolated zeros (by analyticity), we can choose the point $\rho^*\in (0,1)$ of Theorem \ref{thm:Gamma} so that the conditions $f < 0$ in $(\rho^*-\varepsilon, \rho^*)$ and $f> 0$ in $(\rho^*, \rho^* + \varepsilon)$ hold, for some $\varepsilon>0$.
\end{remark}

By Proposition \ref{thm:Per1n} and Theorem \ref{thm:Gamma}, we have:

\begin{corollary}\label{cor:orthogonal}
Let $(\alpha,\beta,\gamma) \in \cW$ be the point $\Upsilon(\rho^*)$ given in Theorem \ref{thm:Gamma}. Let $\Sigma_0=\Sigma_0(\alfa,\beta,\gamma)$ be the compact CMC annulus defined in Proposition \ref{thm:Per1n}. Then, $\Sigma_0$ intersects orthogonally the sphere where $\partial \Sigma_0$ lies.
\end{corollary}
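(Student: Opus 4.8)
The plan is to show that the constant intersection angle $\theta$ between $\Sigma_0$ and the sphere containing $\parc\Sigma_0$ equals $\pi/2$, by reducing this to the single algebraic identity $y(u^*)=z(u^*)$ and then invoking the parameter $\rho^*$ produced by Theorem \ref{thm:Gamma}.

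First I would set up the geometry using Proposition \ref{thm:Per1n}. At the parameter $(\alpha,\beta,\gamma)=\Upsilon(\rho^*)\in\cW$, item (4) of that proposition tells us that both components $\psi(\pm u^*,v)$ of $\parc\Sigma_0$ lie in one common sphere $\mathbf{S}_{u^*}$, and that $\Sigma_0$ meets it at a constant angle $\theta$ with $\tan\theta=1/(z(u^*)-y(u^*))$. By Remark \ref{re:radan}, this angle is a right angle precisely when $\cos\theta=0$, i.e.\ exactly when $y(u^*)=z(u^*)$. Thus the corollary is equivalent to verifying this one scalar identity at the chosen parameter.

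Next I would establish that identity. Theorem \ref{thm:Gamma}, applied to the arc $\Upsilon$, yields $\rho^*\in(0,1)$ with $u^*(\Upsilon(\rho^*))=\tau(\Upsilon(\rho^*))$. Since $\Upsilon(\rho^*)\in\cW$, Proposition \ref{prosistem2} guarantees that $\tau$ is precisely the value at which $y(\tau)=z(\tau)$. Substituting $u^*=\tau$ then gives $y(u^*)=z(u^*)$, which by the previous paragraph forces $\theta=\pi/2$, as desired.

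I do not expect a genuine obstacle here, since the analytic substance has already been absorbed by Theorem \ref{thm:Gamma} (the sign change of $f=u^*-\tau$ along $\Upsilon$) and by Propositions \ref{prosistem1} and \ref{prosistem2}. The only point worth checking is that $\mathbf{S}_{u^*}$ is an honest finite-radius sphere, so that the phrase ``the sphere where $\parc\Sigma_0$ lies'' is well posed: this follows from $u^*\in(0,u_1)$ (Lemma \ref{ustar}) together with $y(u)>0$ on $(0,u_1)$ (Proposition \ref{prosistem1}), which makes $R^2=(1+(z(u^*)-y(u^*))^2)/y(u^*)^2$ finite, while the coincidence $\mathbf{S}_{u^*}=\mathbf{S}_{-u^*}$ comes from $\hat{c}_3(\pm u^*)=0$ and the fact (Lemma \ref{lem:wen}) that all centers $\hat{c}(u)$ lie on the common vertical axis $L$.
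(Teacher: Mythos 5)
Your proposal is correct and follows exactly the route the paper intends: the paper states this corollary as an immediate consequence of Proposition \ref{thm:Per1n} and Theorem \ref{thm:Gamma}, and your argument simply spells out that chain ($u^*=\tau$ from Theorem \ref{thm:Gamma}, $y(\tau)=z(\tau)$ from Proposition \ref{prosistem2}, and orthogonality from the angle formula in Remark \ref{re:radan} and Proposition \ref{thm:Per1n}(4)). Your additional check that $\mathbf{S}_{u^*}$ is a genuine finite-radius sphere, via $y>0$ on $(0,u_1)$, is a sensible detail the paper leaves implicit.
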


\subsection{Embedded free boundary CMC annuli in $\B^3$}

We now finally prove Theorem \ref{th:main}. 

Let $n \geq 2$, and define the analytic map $G:(0,\infty)\times (0,\8) \to \R^3$ given by
$$G(\alpha,\beta):= (\alpha,\beta,\gamma_c(\alpha,\beta)),$$
\noindent where $c:= -1/n$ and $\gamma_c$ was defined in Proposition \ref{pro:period}. So, by Proposition \ref{pro:period}, $G$ is a parametrization of the level set ${\rm Per}^{-1}(-1/n)$. Define next the curve $$\Upsilon(\beta):= G(1,\beta): [1,\infty) \to \R^3.$$
Under these conditions, we will show that there is some $\beta_1> 1$ such that $\Upsilon(\beta) \in \cW$ for all $\beta \in [1,\beta_1)$ and $\Upsilon(\beta_1)\notin \cW$ satisfies \eqref{eq:igualdadlimite}. To do so, we consider the auxiliary function 
$$L(\alpha,\beta,\gamma) := C^2 - \frac{(A - B)^2}{4AB},$$
\noindent where $\alfa,\beta,\gamma>0$ and $A, B, C$ are defined in terms of them by \eqref{const2}. From the definition of $L$, it follows that $L>0$ on $\cW$. According to \eqref{eq:Ckappa}, the composition $L(\Upsilon(\beta))$ can be written as
$$L(\Upsilon(\beta)) = \frac{c^2(B + 1)}{2(1-c^2)} - \frac{(B - 1)^2}{4B},$$
\noindent where $B =\frac{1}{2}\left( \beta + 1/\beta\right)$. Note that, since $c^2\leq 1/4$, we have 
$$L(\Upsilon(1)) =\frac{c^2}{1 - c^2}>0$$ 
and
$$\lim_{\beta \to \infty} L(\Upsilon(\beta)) = \lim_{B \to \infty} \frac{c^2(B + 1)}{2(1-c^2)} - \frac{(B - 1)^2}{4B} = -\infty.$$
As a consequence, there exists some $\beta_1 > 1$ verifying $L(\Upsilon(\beta_1)) = 0$ and $L(\Upsilon(\beta)) > 0$ for all $\beta\in [1,\beta_1)$. In other words, $\Upsilon(\beta) \in \cW$ for all $\beta \in [1,\beta_1)$ and $\Upsilon(\beta_1)=:(1,\beta_1,\gamma_1)\notin \cW$ satisfies \eqref{eq:igualdadlimite}. Applying Theorem \ref{thm:Gamma} we deduce the existence of some $\beta^* \in (1, \beta_1)$ such that $u^*(\Upsilon(\beta^*)) = \tau(\Upsilon(\beta^*))$.

Let $(1,\beta^*,\gamma^*) = \Upsilon(\beta^*) = G(1, \beta^*)$. We can assume that $\beta^*$ is in the conditions of Remark \ref{rem:cambio}. We now consider the function $$F(\alpha,\beta) =  u^*(G(\alpha,\beta))  - \tau(G(\alpha,\beta)).$$
\noindent Note that $F(1,\beta^*) = 0$. Also, $F$ is analytic around $(1,\beta^*)$, since the maps $u^*,\tau$ are analytic at $(1,\beta^*,\gamma^*)$, by Proposition \ref{prosistem1} and Lemma \ref{ustar}. This implies the existence of an analytic curve $$\cC(\mu) := (\alpha(\mu),\beta(\mu)):[0,\varepsilon) \to \R^2$$
such that $F(\cC(\mu)) \equiv 0$, with $\cC(0) = (1,\beta^*)$ and $\alpha(\mu) > 1$ for every $\mu\in (0,\ep)$. 

We now define $\Sigma(\mu)$, $\mu \in [0,\varepsilon)$, as the $H=1/2$ surface $\psi:\R^2\flecha \R^3$ of Definition \ref{def:Sigma} associated to the values $(\alfa,\beta,\gamma)=G(\cC(\mu))$. Also, as in Definition \ref{def:sigma0}, we define $\Sigma_0(\mu)$ as the restriction of the immersion $\psi(u,v)$ to $[-u^*(\mu),u^*(\mu)] \times \R$, where $u^*(\mu) := u^*(G(\cC(\mu)))$. We will also denote $\tau(\mu) := \tau(G(\cC(\mu)))$. From this construction we can deduce the following assertions:
\begin{enumerate}
\item \emph{Each $\Sigma_0(\mu)$ is a compact $H=1/2$ annulus that intersects orthogonally a sphere $\S_{\mu}$ of some radius $R(\mu)>0$ along $\parc \Sigma_0(\mu)$}. This follows from Proposition \ref{thm:Per1n} and Corollary \ref{cor:orthogonal}. 
\item \emph{$\Sigma(0)$ is the universal cover of a nodoid}. Indeed, this follows from our discussion in Section \ref{sec:nodoid}, and the fact that $\cC(0) = (1,\beta^*)$. In particular, $\Sigma_0(0)$ is a rotational annulus.
\item \emph{If $\mu>0$, then $\Sigma_0(\mu)$ has a prismatic symmetry group of order $4n$}. This is immediate, again by Proposition \ref{thm:Per1n}.
\item \emph{The family of compact annuli $\{\Sigma_0(\mu): \mu\in [0,\ep)\}$ is real analytic in terms of $\mu$}. This is immediate from the analytic nature of the construction.
\end{enumerate}

We will now prove that, maybe for a smaller $\ep>0$, every such annuli $\Sigma_0(\mu)$ with $\mu\in [0,\ep)$ is contained in the ball $B_{\mu}$ of $\R^3$ bounded by the sphere $\S_{\mu}$, and is embedded. 

To start, let $\psi_{\mu}(u,v):I_{\mu}\times \R\flecha \R^3$, where $I_{\mu}:=[-u^*(\mu),u^*(\mu)]$, denote our usual parametrization of the compact annulus $\Sigma_0(\mu)$. Recall that, by Proposition \ref{thm:Per1n}, $\psi_{\mu}$ is periodic in the $v$-direction with a fundamental period $2n\sigma$, where $\sigma=\sigma(\mu)>0$ denotes the analytic function $\sigma(\mu) := \sigma(G(\cC(\mu)))$; see Remark \ref{sigmana}. Therefore, we will view $\psi_{\mu}(u,v)$ as a parametrization of $\Sigma_0(\mu)$ defined on $I_\mu\times \S^1$ after the identification $(u,v)\sim (u,v+2n\sigma)$.


\begin{claim}\label{claim:nodoid}
$\psi_0 :I_0\times \S^1\flecha \Sigma(0)$ is an injective parametrization of the embedded free boundary nodoid ${\bf N}_{\gamma}\subset \Sigma(0)$ of Definition \ref{def:fbnodoid}, where $\gamma:= \gamma_c(1,\beta^*)$.
\end{claim}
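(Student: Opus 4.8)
The plan is to prove that $\psi_0$ is an injective parametrization of the embedded free boundary nodoid ${\bf N}_\gamma$ by identifying the image of $\psi_0$ with the specific compact piece of the complete nodoid $\Sigma_\gamma$ described in Section \ref{sec:nodoid}, and then checking that the parameter range $[-u^*,u^*]$ matches the range $[-\delta,\delta]$ defining ${\bf N}_\gamma$. Recall from item (6) of Proposition \ref{thm:Per1n} and assertion (2) in the main proof that $\Sigma_0(0)$ is a compact piece of a nodoid, with $L$ as its rotation axis. Since $\cC(0)=(1,\beta^*)$, we have $\alpha=1$, so by the discussion in Section \ref{sec:nodoid} the full surface $\Sigma(0)=\psi_0(\R^2)$ is the universal cover of the complete nodoid $\Sigma_\gamma$ with $\gamma=\gamma_c(1,\beta^*)>1$; the profile curve is $u\mapsto \psi_0(u,0)$ and the $v$-curves $\psi_0(u,v)$ cover horizontal circles.

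First I would establish injectivity of $\psi_0$ on $I_0\times \S^1$. The key point is that ${\rm Per}(1,\beta^*,\gamma)=-1/n$, so by Proposition \ref{thm:Per1n} the map $\psi_0$ is $2n\sigma$-periodic in $v$ and the closed planar geodesic $\psi_0(0,v):[0,2n\sigma]\to\{x_3=0\}$ has rotation index $-1$ (item (3) there), meaning it traverses its circle exactly once. Hence each horizontal circle $v\mapsto\psi_0(u,v)$ is covered exactly once as $v$ ranges over $\S^1=\R/2n\sigma\Z$, so there is no overcounting in the $v$-direction. In the $u$-direction, injectivity follows because the profile curve $u\mapsto\psi_0(u,0)$ is embedded on $[-u^*,u^*]$: by Proposition \ref{prosistem1}, $y(u)>0$ on $(0,u_1)$, and one checks via \eqref{radan} and Lemma \ref{lem:wen} that distinct values of $u\in[-u^*,u^*]$ give distinct points of the profile curve (the nodoid profile between two consecutive necks/waists is a graph-like arc).

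Next I would identify $u^*$ with the value $\delta$ from Section \ref{sec:nodoid}. This is the crux of the argument. The value $\delta\in(0,\bar u)$ was characterized geometrically as the unique parameter where the tangent line to the profile $\psi_0(u,0)$ at $u=\delta$ passes through the point $p_\gamma$ on the rotation axis in the plane $x_3=0$. On the other hand, $u^*$ is defined in Lemma \ref{ustar} by the condition $\hat c_3(u^*)=0$, i.e. the sphere ${\bf S}_{u^*}$ containing the boundary curve has its center in the plane $x_3=0$. I would argue these coincide: when $\alpha=1$, the sphere ${\bf S}_{u^*}$ meets $\Sigma_0(0)$ orthogonally along $\partial\Sigma_0(0)$ (this is exactly the free boundary condition $u^*=\tau$ forced by $F(\cC(0))=0$, i.e. $u^*(1,\beta^*,\gamma^*)=\tau(1,\beta^*,\gamma^*)$), and for a rotational surface meeting a sphere orthogonally along a horizontal circle, the center of that sphere is precisely the intersection point $p_\gamma$ of the axis with $\{x_3=0\}$, and the tangent line to the profile at the boundary passes through $p_\gamma$. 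Thus $\hat c(u^*)=p_\gamma$ and the defining conditions match, giving $u^*=\delta$.

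The main obstacle I expect is rigorously matching the two characterizations of the boundary parameter — the analytic one ($\hat c_3(u^*)=0$ together with the orthogonality $u^*=\tau$) and the geometric one ($\delta$ defined by the tangent line through $p_\gamma$). The argument hinges on the fact that, in the rotational case $\alpha=1$, Joachimsthal's theorem (Remark \ref{re:radan}) together with $\hat c_3(u^*)=0$ forces the sphere ${\bf S}_{u^*}$ to be centered at $p_\gamma$ and to meet the surface orthogonally, which is exactly the condition defining the free boundary nodoid ${\bf N}_\gamma$. Once this identification is secured, embeddedness and the free boundary property of $\psi_0(I_0\times\S^1)$ are inherited directly from the explicit description of ${\bf N}_\gamma$ in Definition \ref{def:fbnodoid}, and the necksize is $r_\gamma=2/(\gamma^2-1)$ by \eqref{necksize}, completing the proof of the Claim.
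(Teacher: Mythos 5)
Your strategy is the same as the paper's (rotation index $-1$ for injectivity in $v$, then identification of $u^*$ with $\delta$ via the tangent line through $p_\gamma$), but there is a genuine gap at what you yourself call the crux. The value $\delta$ is characterized as the unique parameter \emph{in the interval} $(0,\bar u)$ at which the tangent line to the profile curve passes through $p_\gamma$; uniqueness fails outside that interval, since the profile of the complete nodoid is a periodic, self-intersecting curve and its tangent line passes through the fixed point $p_\gamma$ at infinitely many parameter values. So even after you show (correctly, combining $y(u^*)=z(u^*)$, equation \eqref{gencen}, $\hat c_3(u^*)=0$ and Lemma \ref{lem:wen}) that the tangent line at $u^*$ passes through $p_\gamma$, you cannot conclude $u^*=\delta$ without first proving that $u^*<\bar u$. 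Your proposal never establishes this, and it is not automatic: Lemma \ref{ustar} only gives $u^*<u_1$, and $u_1$ has no a priori relation to the waist parameter $\bar u$. The same hidden assumption also undermines your injectivity claim in the $u$-direction, since the assertion that distinct $u\in[-u^*,u^*]$ give distinct points of the profile is only valid once one knows $[-u^*,u^*]$ lies inside the embedded arc between the two waists, i.e.\ again $u^*<\bar u$.

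This is exactly the step to which the paper devotes a separate argument. Since $u^*=\tau(0)$, one has $y(u)\geq z(u)$ on $(0,u^*]$; since $\beta^*>1$, \eqref{inicondi} gives $z'(0)>0$, so by item (3) of Proposition \ref{prosistem1} one gets $z(u)>0$ on $(0,u_1]$; hence by \eqref{omu},
\begin{equation*}
\omega_u = y\cosh\omega + z\sinh\omega \;\geq\; z\,(\cosh\omega+\sinh\omega) \;=\; z\,e^{\omega} \;>\;0 \quad\text{on } (0,u^*],
\end{equation*}
so $\omega$ is strictly increasing there and $\kappa_1=e^{-\omega}\cosh\omega$ is strictly decreasing on an open interval containing $(0,u^*]$. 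Since $\kappa_1$ has a strict local minimum at the waist $\bar u$, this forces $u^*<\bar u$. With this inequality in hand, your identification of the two characterizations of the boundary parameter (and the resulting embeddedness and free boundary property inherited from ${\bf N}_{\gamma}$) goes through as you describe, and coincides with the paper's proof.
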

\begin{proof}
Recall from Section \ref{sec:nodoid} that every curve $v\mapsto \psi_0(u_0,v)$ parametrizes a horizontal circle of the rotational surface $\Sigma(0)$. Since, by construction, the period map ${\rm Per}$ on $\Sigma(0)$ is equal to $-1/n$, it follows from Proposition \ref{thm:Per1n} that the rotation index of the planar geodesic $\psi(0,v):\S^1\flecha \{x_3=0\}$ is $-1$. Therefore, for any $u_0\in I_{0}$ we have that $v\in \S^1\mapsto \psi(u_0,v)$ is injective.

From now on, we will denote by $\Sigma$ the nodoid $\Sigma(0)$. Also, we denote $\Sigma_0 := \Sigma_0(0)$ and $u^*:= u^*(0) = \tau(0)$. Our goal will be to prove that $u^*$ coincides with the value $\delta \in (0,\overline{u})$ defined in Section \ref{sec:nodoid} associated to the nodoid ${\bf N}_{\gamma}$. This will prove that $\psi_0(I_0\times \S^1)=\Sigma_0={\bf N}_{\gamma}$, as stated.



%
%
%

Following the discussion in Section \ref{sec:nodoid}, the profile curve of the nodoid $\Sigma$ is given by $u \mapsto \psi(u,0)$. The curvature of this curve is $\kappa_1 = e^{-\omega}\cosh \omega$, where $\omega = \omega(u)$ is the solution to \eqref{hatode}. We note that $\kappa_1$ is strictly decreasing in the interval $u \in [0,\overline{u}]$ and reaches its minimum at $\overline{u}$; recall that $\overline{u}$ was also defined in Section \ref{sec:nodoid}. This can be used to show that $u^* < \overline{u}$, as follows. First, by the definition of $u^*$, for every $u \in (0,u^*]$ we have that $y(u) \geq z(u)$. Moreover, since $\beta^* > 1$, we have by \eqref{inicondi} and Proposition \ref{prosistem1} that $z(u)>0$ when restricted to that interval. As a consequence, by \eqref{omu},
$$\omega_u(u) = y(u)\cosh\omega + z(u)\sinh\omega \geq z(u)(\cosh\omega + \sinh\omega) = z(u) e^{\omega} > 0$$
\noindent for every $u \in (0,u^*]$. So, $\omega(u)$ is strictly increasing in an open interval $\mathcal{J}$ containing $(0,u^*]$, which means that $\kappa_1(u)$ is strictly decreasing in $\mathcal{J}$. Consequently, $u^*<\overline{u}$, since $\kappa_1(u)$ has a strict local minimum at $\bar{u}$.

Now we will study the vertical planar geodesic $\Gamma(u):= \psi(u,0)$. Since $u^* = \tau(0)$, we deduce that $y(u^*) = z(u^*)$, which implies by \eqref{gencen} that the tangent line to $\Gamma(u)$ at the point $\Gamma(u^*)$ passes through $\hat{c}(u^*)$. By definition of $u^*$ and Proposition \ref{thm:Per1n}, $\hat{c}(u^*)$ is the intersection point of the rotation axis of the nodoid with the plane $x_3 = 0$. We know that there is a unique value $\delta \in (0,\overline{u})$ satisfying this property (see Section \ref{sec:nodoid}), so $\delta$ must coincide with $u^*$. This completes the proof of the claim.
\end{proof}

As a consequence of Claim \ref{claim:nodoid}, the compact CMC annulus $\Sigma_0(0)={\bf N}_{\gamma}$ is embedded, and contained in the ball $B_0$ bounded by the sphere $\S_0$ where $\parc \Sigma_0(0)$ is contained. By the real analyticity of the family of compact annuli $\{\Sigma_0(\mu)\}_{\mu}$ we deduce that, for $\ep>0$ small enough and any $\mu\in [0,\ep)$, the annulus $\Sigma_0(\mu)$ is also embedded and contained in the ball $B_{\mu}$ bounded by $\S_{\mu}$.

Finally, for any $\mu\in [0,\ep)$, let $\Psi_\mu$ denote the homothety and translation of $\R^3$ that sends the sphere $B_{\mu}$ to the unit ball $\B^3$ of $\R^3$. Then, $\mathbb{A}_n(\mu):=\Psi_\mu(\Sigma_0(\mu))$ is an embedded free boundary CMC annulus in $\B^3$ with all the properties listed in Theorem \ref{th:main}. This completes the proof.

\section{Discussion of the examples}\label{sec:discussion}

The existence of the family $\mathbb{A}_n(\mu)$ of non-rotational embedded free boundary CMC annuli of Theorem \ref{th:main} can be seen as a bifurcation of certain embedded free boundary nodoids in $\B^3$. The bifurcation of complete nodoids in $\R^3$ was studied by Mazzeo and Pacard in \cite{MP}, where they showed the existence of bifurcation branches of the family of nodoids in $\R^3$ that give rise to complete, properly immersed, cylindrically bounded CMC annuli in $\R^3$ with a finite symmetry group. Our annuli $\mathbb{A}_n(\mu)$ come, after a homothety, from compact pieces of complete $H=1/2$ annuli with the same properties than the Mazzeo-Pacard examples. However, they are different, since they have different symmetry groups. We do not know if the Mazzeo-Pacard examples in \cite{MP} posses some compact portion that has free boundary in a ball of $\R^3$.

Our construction gives a sequence of mean curvature values $\{H_n\}_n$ for which the embedded free boundary nodoid in $\B^3$ with mean curvature $H_n>0$ bifurcates into our family of CMC annuli $\mathbb{A}_n(\mu)$. In that bifurcation, the free boundary condition is preserved, but the constant mean curvatures of the annuli $\mathbb{A}_n(\mu)$ are not equal to $H_n$, in general. The bifurcation values $H_n$ tend to $\8$ as $n\to \8$ and can be estimated numerically following the construction process described in this paper. We omit the details of this numerical estimation, since it is a bit involved. 

It remains open to show if for any $H>0$ there exists a non-rotational, embedded free boundary annulus in $\B^3$ with constant mean curvature $H$. We remark nonetheless that our construction does not work for $H=0$. Indeed, it was shown in \cite{FHM} that the only embedded free boundary minimal annulus in $\B^3$ foliated by spherical curvature lines is the critical catenoid. We also note that, if $n$ is even, the annuli $\mathbb{A}_n(\mu)$ are symmetric with respect to three orthogonal planes of $\R^3$. Thus, McGrath's characterization of the critical catenoid among free boundary minimal annuli in $\B^3$ (see \cite{M}) does not hold in the general CMC case.

It seems interesting, in the view of Theorem \ref{th:main}, to update Wente's uniqueness problem as follows: \emph{does every embedded free boundary CMC annulus in $\B^3$ have a family of spherical curvature lines?}

\def\refname{References}

\vskip 0.2cm

\noindent Alberto Cerezo

\noindent Departamento de Geometría y Topología \\ Universidad de Granada (Spain) \\ Departamento de Matemática Aplicada I \\ Universidad de Sevilla (Spain)

\noindent  e-mail: {\tt cerezocid@ugr.es}

\vskip 0.2cm

\noindent Isabel Fernández

\noindent Departamento de Matemática Aplicada I,\\ Instituto de Matemáticas IMUS \\ Universidad de Sevilla (Spain).

\noindent  e-mail: {\tt isafer@us.es}

\vskip 0.2cm

\noindent Pablo Mira

\noindent Departamento de Matemática Aplicada y Estadística,\\ Universidad Politécnica de Cartagena (Spain).

\noindent  e-mail: {\tt pablo.mira@upct.es}

\vskip 0.4cm

\noindent This research has been financially supported by Project PID2020-118137GB-I00 funded by MCIN/AEI /10.13039/501100011033.

\end{document}